\def\Im{{\rm Im}}
\def\beq{\begin{equation}}   \def\eeq{\end{equation}}
\def\bea{\begin{eqnarray}}  \def\eea{\end{eqnarray}}
\newcommand{\be}{\begin{equation}}
\newcommand{\ee}{\end{equation}}
\newcommand{\ben}{\begin{equation*}}
\newcommand{\een}{\end{equation*}}
\newcommand{\ban}{\begin{align*}}
\newcommand{\ean}{\end{align*}}
\def\vf{\varphi}
\def\bee{{\bf e}}
\def\be#1{\bee_{#1}}
\def\ir{{\rm i}}
\def\vf{\varphi}
\newcommand{\1}{\mathbb  I}
\def\norma#1{\left\|#1\right\|}
\newcommand{\C}{{\mathbb C}}
\newcommand{\D}{{\mathcal D}}
\newcommand{\E}{{\mathcal E}}
\newcommand{\N}{{\mathbb N}}
\newcommand{\R}{{\mathbb R}}
\newcommand{\T}{{\mathbb T}}
\newcommand{\Z}{{\mathbb Z}}
\newcommand{\cE}{{\mathcal E}}
\newcommand{\cH}{{\mathcal H}}
\newcommand{\cL}{{\mathcal L}}
\newcommand{\cO}{{\mathcal O}}
\newcommand{\cP}{{\mathcal P}}
\newcommand{\cR}{{\mathcal R}}
\newcommand{\cU}{{\mathcal U}}
\newcommand{\norm}[1]{\| #1 \|}
\newcommand{\im}{{\rm i}}
\def\uno{{\bf 1}}
\numberwithin{equation}{section}
\newtheorem{theorem}{Theorem}[section]
\newtheorem{lemma}[theorem]{Lemma}
\newtheorem{corollary}[theorem]{Corollary}
\newtheorem{proposition}[theorem]{Proposition}
\newtheorem{definition}[theorem]{Definition}
\newtheorem{remark}[theorem]{Remark}
\newcommand{\eps}{\epsilon}
\newcommand{\dd}{  \text{d}   }
\newcommand{\om}{  \omega   }
\renewcommand{\a}{  \alpha   }
\newcommand{\p}{  \partial   }
\renewcommand{\b}{  \beta   }
\newcommand{\s}{  \sigma   }
\newcommand{\lan}{  \langle  }
\newcommand{\ran}{  \rangle  }
\newcommand{\ka}{  \kappa   }
\renewcommand{\L}{  \mathcal{L}   }
\newcommand{\M}{  \mathcal{M}   }
\newcommand{\diag}{\operatorname{diag}}
\newcommand{\meas}{\operatorname{meas}}
\newcommand{\dist}{\operatorname{dist}}
\def\norma#1{\left\| #1\right\|}
\title{Reducibility of the Quantum Harmonic Oscillator in
  $d$-dimensions with Polynomial Time Dependent Perturbation
}
\author{
D. Bambusi
\footnote{Dipartimento di Matematica, Universit\`a degli Studi di Milano, Via Saldini 50, I-20133
Milano. 
\newline
{\em Email: {\tt dario.bambusi@unimi.it}}},
B. Gr\'ebert
\footnote{Laboratoire de Math\'ematiques Jean Leray, Universit\'e de Nantes, 2 rue de la Houssini\`ere
BP 92208, 44322 Nantes.
\newline
 {\em Email: {\tt benoit.grebert@univ-nantes.fr}}}, 
 A. Maspero
 \footnote{
  International School for Advanced Studies (SISSA), Via Bonomea 265, 34136, Trieste, Italy \newline
 \textit{Email: } \texttt{alberto.maspero@sissa.it}}, 
 D. Robert
 \footnote{
 Laboratoire de Math\'ematiques Jean Leray, Universit\'e de Nantes, 2 rue de la Houssini\`ere
BP 92208, 44322 Nantes.
\newline
 {\em Email: {\tt  didier.robert@univ-nantes.fr}}}
}
\begin{document}

\maketitle

\begin{abstract}
We prove a reducibility result for a quantum harmonic oscillator in arbitrary
dimension with arbitrary frequencies perturbed by a linear operator
which is a polynomial of degree two in $x_j$, $-\ir\partial_j$ with
coefficients which depend quasiperiodically on time.
\end{abstract}

\section{Introduction and statement}

The aim of this paper is to present a reducibility result for the time
dependent Schr\"odinger equation
\begin{align}
\label{schro}
\ir \dot\psi=H_\epsilon(\omega t)\psi\ , \ x\in\R^d
\\
\label{H}
H_\epsilon(\omega t):=H_0+\epsilon W(\omega t, x,-\ir \nabla)
\end{align}
where 
\begin{equation}
\label{pot}
H_0:= -\Delta+V(x),\; \quad V(x):=\sum_{j=1}^{d}\nu_j^2x_j^2\ ,\quad \nu_j>0
\end{equation}
and $W( \theta,  x,\xi )$ {\it is a real  polynomial in $(x,\xi)$ of degree  at most two}, with
coefficients being real analytic functions of $\theta\in \T^n$. Here
$\omega$ are parameters which are assumed to belong to the set
$\D=(0,2\pi)^{n}$.

For $\epsilon=0$ the spectrum of \eqref{H} is
given by
\begin{equation}
\label{sp}
\sigma(H_0)=\{\lambda_k\}_{k \in \N^d}\ ,\quad
\lambda_k\equiv\lambda_{(k_1,...,k_d)}:=\sum_{j=1}^{d}(2k_j+1)\nu_j\ ,
\end{equation}
with $k_j\geq 0$ integers. In particular if the frequencies
$\nu_j$ are nonresonant, then the differences between couples of
eigenvalues are dense on the real axis. As a consequence, in the case  $\epsilon =0$ most of the
solutions of \eqref{schro} are almost periodic with an infinite number of rationally independent frequencies.

Here we will prove  that for any choice of the mechanical frequencies
$\nu_j$ and for $\omega$ belonging to a set of large measure in $\D$ the system \eqref{schro}
is reducible: precisely there exists a time quasiperiodic unitary
transformation of $L^2(\R^d)$  which conjugates \eqref{H} to a time independent
operator; we also deduce boundedness of the Sobolev norms of the solution.

The proof exploits the fact that for polynomial Hamiltonians of degree
at most 2 the correspondence between classical and quantum mechanics
is exact (i.e. without error term), so that the result can be proven by
exact quantization of the classical KAM theory which ensures
reducibility of the classical Hamiltonian system
\begin{equation}
\label{clas}
h_\epsilon := h_0+\epsilon W( \omega t, x,\xi)\ ,\quad
h_0:=\sum_{j=1}^{d}{\xi_j^2+\nu_j^2x_j^2}\ .
\end{equation} 
We will use (in the appendix) the exact correspondence between
classical and quantum dynamics of quadratic Hamiltonians also to prove
a complementary result. Precisely we will present a class of examples
(following \cite{graffi}) in which one generically has growth of Sobolev
norms. This happens when the frequencies $\omega$ of the external
forcing are resonant with some of the $\nu_j$'s.

We recall that the exact correspondence between classical and quantum
dynamics of quadratic Hamiltonians was already exploited in the paper
\cite{Hagedorn} to prove stability/instability results for one degree
of freedom time dependent quadratic Hamiltonians.

Notwithstanding the simplicity of the proof, we think that the present
result could have some interest, since this is the first example of a
reducibility result for a system in which the gaps of the unperturbed
spectrum are dense in $\R$. Furthermore it is one of the few cases in
which reducibility is obtained for systems in more than one space
dimension.

Indeed, most of the results on the reducibility problem for 
\eqref{schro} have been obtained in the one dimensional case, and also
the results in higher dimensions obtained up to now deal only with
cases in which the spectrum of the unperturbed system has gaps whose
size is bounded from below, like in the Harmonic oscillator (or in
the Schr\"odinger equation on $\T^d$). On the other hand we restrict
here to perturbations, which although unbounded, must belong to the
very special class of polynomials in $x_j$ and $-\ir\partial_j$. 
The reason is that for operators in this class, the commutator  is the operator whose symbol is the Poisson bracket of the corresponding symbols,  without any error term (see Remark \ref{key.1}  and Remark \ref{key.31}).  In order to deal with more general perturbations one needs
further ideas and techniques.

\vskip10pt

Before closing this introduction we recall some previous works on the
reducibility problem for \eqref{schro} and more generally for
perturbations of the Schr\"odinger equation with a potential
$V(x)$. As we already anticipated, most of the works deal with the one
dimensional case. The first one is \cite{C87} in which pure point
nature of the Floquet operator is 
obtained in case of a smoothing perturbation of the Harmonic
oscillator in dimension 1 (see also \cite{Kuk93}). The techniques of
this paper were extended in \cite{DS96,DSV02}, in order to deal with
potentials growing superquadratically (still in dimension 1) but with
perturbations which were only required to be bounded.

A slightly different approach originates from the so called KAM
theory for PDEs \cite{Kuk87,Way90}. In particular the methods
developed in that context in order to deal with unbounded perturbations
(see \cite{Kuk97,Kuk98}) where exploited in \cite{BG01} in order to
deal with the reducibility problem of \eqref{schro} with
superquadratic potential in dimension 1 (see \cite{LY10} for a further
improvement). The case of bounded perturbations of the Harmonic
oscillator in dimension 1 was treated in \cite{W08,GT11}.

The only works dealing with the higher dimensional case are \cite{eliasson09}
actually dealing with bounded perturbations of the Schr\"odinger
equation on $\T^d$ and \cite{BP16} dealing with bounded perturbations
of the completely resonant Harmonic oscillator in $\R^d$. 

All these papers  deal with cases where the spectrum of the
unperturbed operator is formed by well separated eigenvalues.  In the
higher dimensional cases they are allowed to have high multiplicity {blue} localized in clusters.
But then the perturbation must have special properties ensuring that
the clusters are essentially not destroyed under the KAM iteration.

Finally we recall the works \cite{Bam16I,Bam17b} in which
pseudodifferential calculus was used together with KAM theory in
order to prove reducibility results for \eqref{schro} (in dimension 1)
with unbounded perturbations. The ideas of the present paper are a
direct development of the ideas of  \cite{Bam16I,Bam17b}. We also
recall that the idea of using pseudodifferential calculus together
with KAM theory in order to deal with problems involving unbounded
perturbations originates from the work \cite{PT,IPT} and has been
developed in order to give a quite general theory in
\cite{BBM,BM,M} (see also \cite{FP}). \\

In order to state our main result, we need some preparations.
It is well  known that  the  equation (\ref{schro}) is well posed (see for example \cite{maro}) in  the scale $\cH^s$, $s\in\R$ of the weighted Sobolev spaces defined as follows.  For $s\geq 0$ let
$$\cH^s:=\{\psi\in L^2(\R^d)\colon \;\; H_0^{s/2}\psi\in L^2(\R^d)\} \ ,  $$
equipped with the natural Hilbert space norm $\norm{\psi}_s := \norm{H_0^{s/2} \psi}_{L^2(\R^d)}$.   
 For $s<0$, $\cH^s$ is defined 
  by duality.  Such  spaces are  not dependent on $\nu$
for  $\nu_j>0$, $1\leq j\leq d$. We also have $\cH^s \equiv{\rm Dom}(-\Delta +\vert x\vert^2)^{s/2}$.\\

We will prove the following reducibility theorem:
\begin{theorem}
\label{m.1}
Let $\psi$ be a solution of (\ref{schro}). There exist $\epsilon_*>0$, $C>0$ and
$\forall \left|\epsilon\right|<\epsilon_*$ a closed set
$\E_\epsilon\subset (0,2\pi)^n$  with $\meas((0,2\pi)^n \setminus \E_\epsilon) \leq C\epsilon^{\frac{1}{9}} $ and, $\forall
\omega \in \E_\epsilon $ there exists a unitary (in $L^2$) time
quasiperiodic map $U_{\omega}(\omega t)$ s.t. defining  $\varphi$  by
$U_{\omega}(\omega t)\varphi=\psi $, it satisfies the equation
\begin{equation}
\label{rido}
\ir \dot \varphi= H_{\infty}\varphi\ ,
\end{equation}  
with
  $H_{\infty}$
 a positive definite time independent operator which is unitary equivalent to a diagonal operator $$\sum_{j=1}^{d}\nu_j^{\infty}(x_j^2-\partial_{x_j}^2),$$
 where
$\nu_j^{\infty}=\nu_j^{\infty}(\omega)$ are defined for $\omega \in \E_\epsilon$ and fulfill the estimates
$$
|\nu_j-\nu_j^\infty|\leq C\epsilon \ , \qquad j = 1, \ldots, d  \ . 
$$ Finally the following properties hold
\begin{itemize}
\item[(i)] $\forall s \geq 0$,
$\forall \psi \in \cH^s$, $\theta \mapsto U_\om(\theta)\psi \in
C^0(\T^n; \cH^s)$.
\item[(ii)] $\forall s \geq 0$, $\exists C_s >0$ such that  for all $\theta\in\T^n$
\begin{equation} 
\label{pro1}
\norma{\uno-U_{\omega}(\theta)}_{\cL(\cH^{s+2};\cH^{s})}\leq C_s\epsilon.
\end{equation}
\item[(iii)]
$\forall s, r\geq0$, the map $\theta\mapsto
U_\omega(\theta)$ is of class $C^r(\T^n; \cL(\cH^{s+4r+2};\cH^{s}))$.
\end{itemize}
\end{theorem}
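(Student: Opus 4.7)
The plan is to exploit the exact quantization of polynomials of degree $\leq 2$ to reduce the quantum problem to a classical KAM scheme for time-quasiperiodic quadratic Hamiltonians, and then lift the classical symplectic conjugation to a metaplectic unitary. The crucial algebraic fact, emphasized by the authors, is that if $f,g$ are real polynomials in $z=(x,\xi)$ of total degree $\leq 2$ then $[\op{f},\op{g}] = -\ir\,\op{\{f,g\}}$ with \emph{no remainder}. Hence the symbols of degree $\leq 2$ form a closed Lie algebra under the Poisson bracket, and any conjugation at the classical level by the Hamiltonian flow of a quadratic symbol transfers, after Weyl quantization, to an exact conjugation of the operators by a metaplectic unitary.

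With this in mind, write $h_\epsilon(\theta,z) = \tfrac12 z^\top A(\theta)z + b(\theta)\cdot z + c(\theta)$ with $A,b,c$ real analytic in $\theta\in\T^n$; the inhomogeneous terms $b,c$ can be removed at the end by an explicit Weyl translation, so the core task is to reduce the quadratic piece. I would set up a Newton-type KAM iteration on the symbol side. At the $n$-th step, $h_n = h_\infty^{(n)} + \epsilon_n R_n(\theta,z)$ with $h_\infty^{(n)}$ a time-independent positive-definite quadratic form, diagonal in suitable oscillator coordinates with frequencies $\nu_j^{(n)}$; one solves the homological equation
\begin{equation*}
\om\cdot\partial_\theta G_n + \{h_\infty^{(n)}, G_n\} = R_n - \langle R_n\rangle_\theta
\end{equation*}
inside the space of quadratic symbols. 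In Fourier on $\theta$ this requires the second Melnikov conditions
\begin{equation*}
|\om\cdot k \pm \nu_j^{(n)} \pm \nu_l^{(n)}|\geq \gamma\langle k\rangle^{-\tau}, \qquad k\in\Z^n\setminus\{0\},\ 1\leq j,l\leq d,
\end{equation*}
enforced by excluding a subset of $\om\in(0,2\pi)^n$ whose measure is geometrically summable in $n$. Balancing $\gamma\sim\epsilon^{1/9}$ against the quadratic convergence of the scheme yields the stated measure bound.

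Once the classical iteration has produced a convergent time-quasiperiodic symplectic cocycle $S_\om(\om t)$ conjugating $h_\epsilon$ to a time-independent $h_\infty$, Weyl-quantize: the metaplectic lift $U_\om(\om t)$ of $S_\om(\om t)$ is uniquely determined (and continuous in $\theta$) near the identity, it is unitary on $L^2(\R^d)$, and the exact correspondence gives the operator identity
\begin{equation*}
U_\om^{-1}\bigl(H_\epsilon - \ir\,\om\cdot\partial_\theta\bigr)U_\om = H_\infty,
\end{equation*}
which is exactly \eqref{rido}. A further $\theta$-independent metaplectic transformation, corresponding to the symplectic diagonalization of the positive-definite $h_\infty$, produces the diagonal normal form $\sum_j \nu_j^\infty(x_j^2-\partial_{x_j}^2)$. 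For the Sobolev bounds, metaplectic operators preserve $\cH^s$ and conjugate $H_0$ to a comparable positive-definite quadratic operator; the KAM estimate $\|S_\om-I\|\lesssim\epsilon$ then yields \eqref{pro1}, and each $\theta$-derivative of $U_\om$ costs two extra powers of $H_0^{1/2}$, giving the $4r+2$ loss in (iii).

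The main obstacle is the measure estimate for the second Melnikov conditions \emph{in the absence of any non-resonance hypothesis on the unperturbed frequencies $\nu_j$}: the differences $\nu_j-\nu_l$ may even vanish identically, so the whole Diophantine burden is carried by $\om$. This is feasible because for each fixed triple $(k,j,l)$ with $k\ne 0$ the $\om$-gradient of $\om\cdot k \pm \nu_j^{(n)}(\om) \pm \nu_l^{(n)}(\om)$ is within $O(\epsilon)$ of $k$, hence non-degenerate, so the bad set is a slab of thickness $\gamma\langle k\rangle^{-\tau-1}$, and summation over $k$ and over the finitely many pairs $(j,l)$ gives the total excised measure. Propagating the Lipschitz dependence $\om\mapsto \nu_j^{(n)}(\om)$ through the iteration, together with controlling the analyticity loss as the remainders $R_n$ shrink, are the principal technical book-keeping steps.
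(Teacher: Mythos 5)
Your proposal follows essentially the same route as the paper: exploit the fact that Weyl quantization is exact on polynomials of degree at most two, run a classical KAM iteration on the quadratic symbols with Diophantine conditions carried entirely by $\omega$ (transversality of $\omega\mapsto\omega\cdot k$ gives the measure estimate since no hypothesis is made on the $\nu_j$), then lift the resulting affine symplectic cocycle to a unitary cocycle and read off the Sobolev properties. The paper's implementation is the same in substance, with two points where it is slightly more careful than your sketch: (a) the degree-one (inhomogeneous) part of the perturbation is solved \emph{inside} the homological equation rather than stripped off at the end, which makes the first Melnikov conditions $|\omega\cdot k\pm\nu_j|\geq\gamma\langle k\rangle^{-\tau}$ appear explicitly alongside the second ones — if you defer the translation, you still need those and must verify the transported linear term stays quasiperiodic with controlled norm; and (b) rather than invoking the metaplectic representation abstractly, the paper writes the conjugating symplectic map as the time-one flow of a quadratic generating Hamiltonian $\chi_\omega(\theta,x,\xi)$, sets $U_\omega(\theta)=e^{-\im\chi_\omega^w(\theta,x,D)}$, and uses a well-posedness theorem for Schr\"odinger flows of polynomial symbols plus a Duhamel argument to prove (i)--(iii); the ``uniquely determined near the identity'' metaplectic lift you appeal to hides exactly this work. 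These are presentational differences, not gaps in the argument.
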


\begin{remark}
  \label{meta}
{Remark that in Theorem \ref{m.1},  if the  frequencies $\nu_j$ are resonant, then the change of coordinates $U_\omega$ is close to the identity  (in the sense of \eqref{pro1}), but  the Hamiltonian $H_\infty$ is not necessary diagonal.
However it is always possible to diagonalize it by means of a metaplectic transformation which is not close to the identity,  see Theorem \ref{KAMclassico} and Remark \ref{small} below.}
\end{remark}

Let  us denote by $\cU_{\epsilon, \omega}( t,\tau)$ the propagator generated by \eqref{schro}  such that $\cU_{\epsilon, \omega}(\tau, \tau)=\uno$, $\forall \tau \in \R$.
An immediate consequence of Theorem \ref{m.1} is that we have a Floquet   decomposition:
\beq\label{decom}
 \cU_{\epsilon,\omega}( t,\tau) =  U^*_\omega(\omega t){\rm e}^{-\im(t-\tau)H_\infty} U_\omega(\omega\tau).
\eeq

An other consequence of \eqref{decom} is  that   for any $s >0$ the norm $\norm{\cU_{\epsilon, \omega}(t,0) \psi_0}_s$  is bounded uniformly in time:
\begin{corollary}
\label{cor.1}
Let $\om \in \E_\eps$ with  $\vert\epsilon\vert <\epsilon_{*}$.  The following is true: for any $s >0$ one has \\
\beq\label{unest}
 c_{s}\Vert\psi_0\Vert_s \leq\Vert  \cU_{\epsilon,\omega}( t,0)\psi_0\Vert_s \leq C_{s}\Vert\psi_0\Vert_s \ ,\qquad \forall t\in\R \ , 
 \forall\psi_0\in\cH^s, 
\eeq
for some  $ c_{s}>0, C_{s}>0$.\\
Moreover there exists a constant $c'_{s}$ s.t. if the initial data $\psi_0 \in \cH^{s+2}$ then
\beq\label{unest2}
\Vert\psi_0\Vert_s  - \eps c'_{s} \norm{\psi_0}_{s+2}  \leq \Vert  \cU_{\epsilon,\omega}( t,0)\psi_0\Vert_s \leq \Vert\psi_0\Vert_s + \eps c'_{s} \norm{\psi_0}_{s+2} \ , \qquad \forall t\in\R \ .
\eeq
\end{corollary}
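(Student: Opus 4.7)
The plan is to exploit the Floquet decomposition \eqref{decom}, namely $\cU_{\eps,\om}(t,0) = U_\om^*(\om t)\, e^{-\im t H_\infty}\, U_\om(0)$, and reduce both bounds to uniform $\cH^s$-estimates on each of the three factors.

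First I would show that each factor is uniformly bounded on $\cH^s$. For $U_\om(\theta)$: property (i) of Theorem \ref{m.1} makes $\theta\mapsto U_\om(\theta)\psi$ continuous on the compact torus $\T^n$ for every $\psi\in\cH^s$, so by the uniform boundedness principle $\sup_\theta\norma{U_\om(\theta)}_{\cL(\cH^s)}<\infty$; the same holds for $U_\om^*(\theta)$ by the symmetric nature of the KAM construction. For the middle factor, the structural fact implicit in the KAM normal-form iteration behind Theorem \ref{m.1} is that $H_\infty$ commutes with $H_0$: in the non-resonant case $H_\infty$ is itself diagonal in the Hermite basis of $H_0$, while in the resonant case (see Remark \ref{meta}) it is only block-diagonal with respect to the finite-dimensional $H_0$-eigenspaces, the extra metaplectic mentioned there being precisely what further diagonalizes each block. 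In either scenario $e^{-\im t H_\infty}$ commutes with $H_0^{s/2}$ and hence preserves $\norma{\cdot}_s$ exactly. Composing the three bounds via \eqref{decom} gives the upper bound in \eqref{unest}; the lower bound follows by applying the upper bound to the inverse propagator $\cU_{\eps,\om}(0,t)$ (which admits an analogous Floquet representation) and writing $\norma{\psi_0}_s = \norma{\cU_{\eps,\om}(0,t)\,\cU_{\eps,\om}(t,0)\psi_0}_s$.

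For the sharper bound \eqref{unest2} I would compare $\cU_{\eps,\om}(t,0)\psi_0$ with $e^{-\im t H_\infty}\psi_0$ via the identity
$$\cU_{\eps,\om}(t,0)\psi_0 - e^{-\im t H_\infty}\psi_0 = \bigl[U_\om^*(\om t)-\uno\bigr]\,e^{-\im t H_\infty}\,U_\om(0)\psi_0 + e^{-\im t H_\infty}\,\bigl[U_\om(0)-\uno\bigr]\psi_0.$$
The second summand is controlled by $\norma{e^{-\im t H_\infty}}_{\cL(\cH^s)}\cdot\norma{(U_\om(0)-\uno)\psi_0}_s\leq C\eps\norma{\psi_0}_{s+2}$ thanks to \eqref{pro1}; the first summand is controlled by combining \eqref{pro1} for $U_\om^*(\theta)$ (which follows from the identity $\uno-U_\om^*(\theta) = U_\om^*(\theta)(U_\om(\theta)-\uno)$ together with the $\cH^s$-boundedness of $U_\om^*(\theta)$ from the first step) with the uniform $\cH^{s+2}$-boundedness of $e^{-\im t H_\infty}$ and $U_\om(0)$. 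The reverse triangle inequality, together with the exact identity $\norma{e^{-\im t H_\infty}\psi_0}_s=\norma{\psi_0}_s$, then yields \eqref{unest2}.

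The main obstacle I anticipate is precisely justifying the exact identity $\norma{e^{-\im t H_\infty}\psi}_s=\norma{\psi}_s$, which reduces to the commutation $[H_\infty,H_0]=0$. This is a natural consequence of the KAM normal-form construction underpinning Theorem \ref{m.1} --- the iteration removes only those terms of $H_\eps$ that move between distinct $H_0$-eigenspaces, so $H_\infty$ preserves each such eigenspace --- but it is not literally asserted in the statement of Theorem \ref{m.1}. Without it one obtains only $\norma{e^{-\im t H_\infty}\psi}_s\asymp\norma{\psi}_s$ up to a multiplicative constant, which is enough for \eqref{unest} but insufficient for the finer estimate \eqref{unest2}.
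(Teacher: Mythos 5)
Your plan of reading \eqref{unest} off the Floquet decomposition \eqref{decom} is the right one, and your uniform-boundedness argument for $\sup_\theta\norma{U_\om(\theta)}_{\cL(\cH^s)}$ is a legitimate alternative to the paper's direct appeal to Lemma \ref{lem:flow}(iv). But the middle factor is handled incorrectly: the commutation $[H_\infty,H_0]=0$ that you rely on is not established by the KAM construction and is in fact generically false. The normal form produced by the iteration is $H_\infty={\rm Op}^w(h_\infty)$ with $h_\infty=\lan z,N(\om)\bar z\ran$ and $N$ a \emph{full} Hermitian matrix (at each step the whole angle-average $\hat Q_0$, off-diagonal entries included, is absorbed into $\tilde N$ precisely to avoid the small divisors $\nu_j-\nu_k$). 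A direct computation gives $\{h_0,h_\infty\}=\im\sum_{j,k}(\nu_j-\nu_k)N_{jk}z_j\bar z_k$, so $H_\infty$ and $H_0$ commute only if $N_{jk}=0$ whenever $\nu_j\ne\nu_k$, which is not ensured. Nor is it true that $H_\infty$ is block-diagonal in the $H_0$-eigenspaces: an off-diagonal term $N_{jk}$ quantizes to an operator shifting the Hermite index by $e_j-e_k$, moving between different $H_0$-eigenspaces unless $\nu_j=\nu_k$. So your ``exact identity'' $\norma{e^{-\im tH_\infty}\psi}_s=\norma{\psi}_s$ fails, and with it both the clean upper bound step and the argument for \eqref{unest2}.

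The fix used in the paper avoids any commutation with $H_0$. Since $h_\infty$ is a positive-definite quadratic form $\eps$-close to $h_0$, one has the two-sided bound $c_1h_0\le c_0+h_\infty\le c_2(1+h_0)$, hence $C_s^{-1}\norma{\psi}_{\cH^s}\le\norma{H_\infty^{s/2}\psi}_{L^2}\le C_s\norma{\psi}_{\cH^s}$. Now $e^{-\im tH_\infty}$ commutes with $H_\infty^{s/2}$ (trivially, functional calculus of $H_\infty$ itself), so it preserves $\norma{H_\infty^{s/2}\psi}_{L^2}$ exactly; combined with the norm equivalence this gives $\sup_t\norma{e^{-\im tH_\infty}}_{\cL(\cH^s)}<\infty$. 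That, together with uniform bounds on $U_\om$ and $U_\om^*$ (Lemma \ref{lem:flow}) and the inverse-propagator trick for the lower bound, yields \eqref{unest}. For \eqref{unest2} the paper invokes \eqref{pro1}: writing $U_\om=\uno+(U_\om-\uno)$ on both sides of \eqref{decom} and using $\norma{\uno-U_\om(\theta)}_{\cL(\cH^{s+2};\cH^s)}\le C_s\eps$ reduces the claim to $\bigl|\norma{e^{-\im tH_\infty}\phi}_s-\norma{\phi}_s\bigr|\le C\eps\norma{\phi}_{s+2}$, which comes from $N-N_0=\cO(\eps)$ (equivalently $H_\infty-H_0=\cO(\eps)$ in $\cL(\cH^{s+2};\cH^s)$) rather than from exact commutation. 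You should replace the commutation argument by this quadratic-form equivalence throughout.
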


It is interesting to compare  estimate \eqref{unest} with the corresponding estimate which can be obtained for more general perturbations  $W(t,x,D)$. So denote  by $\cU(t,\tau)$ the  propagator of $H_0 + W(t,x,D)$ with $\cU(\tau, \tau) = \uno$. 
Then in \cite{maro} it is proved that if
 $W(t,x,\xi)$ is a real polynomial in $(x, \xi)$ of degree at most 2, the propagator $\cU(t,s)$ exists,  belongs to $\cL(\cH^s)$ $\, \forall s \geq 0$ and fulfills
$$
\norm{\cU(t,0)\psi_0}_s \leq e^{C_s |t|} \norm{\psi_0}_s \ , \qquad \forall t  \in \R
$$
(the estimate is sharp!).  If $W(t,x,\xi)$ is a polynomial of degree at most 1 one has
$$
\norm{\cU(t,0)\psi_0}_s \leq  C_s (1+ |t|)^{s} \, \norm{\psi_0}_s \ , \qquad \forall t \in \R \ . 
$$
Thus estimate \eqref{unest} improves dramatically the upper bounds proved in \cite{maro} when  the perturbation is small and depends quasiperiodically in time with "good" frequencies.

As a final remark we recall that  growth of Sobolev norms can indeed
happen if the frequencies $\om$ are not well chosen. In
 Appendix \ref{example}, we show that the Schr\"odinger equation
$$
\im \dot \psi =
\left[-\frac{1}{2}\partial_{xx}+\frac{x^2}{2}+ax\sin\omega t \right]
\psi \ , \qquad x \in \R \ 
$$ (which was already studied by Graffi and Yajima in \cite{graffi}
 who showed that the corresponding Floquet operator has continuous spectrum)
exhibits growth of Sobolev norms if and only if $\omega = \pm 1$,
which are clearly resonant frequencies. 
We also slightly generalize
the example.

An other  example  of  growth of Sobolev norms for the perturbed harmonic oscillator is given by Delort \cite{del1}. There the perturbation is a  pseudodifferential operator of order 0, periodic in time with resonant frequency $\omega =1$.

\begin{remark}
The uniform time estimate given in (\ref{unest}) is similar to the
main result obtained in \cite{eliasson09} for small perturbation of
the Laplace operator on the torus $\T^d$. Concerning perturbations of
harmonic oscillators in $\R^d$ most reducibility known results are
obtained for $d=1$ excepted in \cite{BP16}. 
\end{remark}

\begin{remark} In \cite{eliasson09,BP16} the estimate \eqref{unest2} is proved without
    loss of regularity; this is due to the fact that the perturbations
    treated in \cite{eliasson09,BP16} are bounded operators. There are
    also some cases (see e.g. \cite{BG01}) in which the reducing
    transformation is bounded notwithstanding the fact that the
    perturbation is unbounded, but this is due to the fact that the
    unperturbed system has suitable gap properties which are not
    fulfilled in our case.
  \end{remark}
  \begin{remark}
    \label{mesure}
The $\epsilon^{1/9}$ estimate on the measure of the set of resonant
frequencies is not optimal. We wrote it just for the sake of giving a
simple quantitative estimate. 
  \end{remark}

\begin{remark}\label{dynquant}
Denote by $\{\psi_{k}\}_{k \in \N^d}$ the set of Hermite functions,
namely the eigenvectors of $H_0$: $H_0 \psi_k = \lambda_k
\psi_k$. They form an orthonormal basis of $L^2(\R^d)$, and writing
$\psi = \sum_k c_k \psi_k$ one has $\norm{\psi}_{s}^2 \simeq \sum_k
(1+|k|)^{2s} |c_k|^2$. Denote $\displaystyle{\psi(t) = \sum_{k
    \in\N^d}c_k(t) \psi_k}$ the solution of \eqref{schro} written on
the Hermite basis.  Then \eqref{unest} implies the following {\em
  dynamical localization} for the energy of the solution: $\forall s
\geq 0$, $\exists \, C_s\equiv C_s(\psi_0) >0$:
\beq\label{ineq:dynquant} \sup_{t \in \R} |c_k(t)| \leq
C_s(1+|k|)^{-s} \, \ , \quad \forall k \in \N^d\ .  \eeq
\end{remark}
 From the dynamical property \eqref{ineq:dynquant} one  obtains easily that every state $\psi\in L^2(\R^d)$ is a bounded state for the time evolution 
$ \cU_{\epsilon,\omega}( t,0)\psi$  under the conditions of Theorem \ref{m.1} on $(\epsilon,\omega)$.  The corresponding definitions are given in \cite{enve}:

\begin{definition}[See \cite{enve}]
A function $\psi\in L^2(\R^d)$ is a bounded state (or belongs to the point spectral  subspace of  $\{ \cU_{\epsilon,\omega}( t,0)\}_{t\in\R}$)
 if the quantum trajectory
$\{\cU_{\epsilon,\omega}( t,0)\psi:\;\; t\in\R\}$ is a precompact
subset of $L^2(\R^d)$.
 \end{definition}

\begin{corollary}
\label{cor.din.loc}
Under the conditions of Theorem \ref{m.1} on $(\epsilon,\omega)$,
every state $\psi\in L^2 (\R^d)$ is a bounded state of
$\{ \cU_{\epsilon,\omega}( t,0)\}_{t\in\R}$.
\end{corollary}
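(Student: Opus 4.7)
The plan is to combine the Floquet decomposition \eqref{decom} with the pure point spectrum of $H_\infty$ to realize the orbit $\{\cU_{\epsilon,\om}(t,0)\psi\}_{t\in\R}$ as the image of a precompact set under a jointly continuous map.

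First, set $\varphi_0:=U_\om(0)\psi\in L^2(\R^d)$ (well defined since $U_\om(0)$ is unitary on $L^2$). By \eqref{decom},
$$
\cU_{\epsilon,\om}(t,0)\psi \;=\; U^*_\om(\om t)\, e^{-\im t H_\infty}\varphi_0, \qquad t\in\R,
$$
so it is enough to prove that the set $\cO:=\{U^*_\om(\om t)\,e^{-\im tH_\infty}\varphi_0:t\in\R\}$ is precompact in $L^2(\R^d)$.

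Next, I would show precompactness of the intermediate orbit $\{e^{-\im tH_\infty}\varphi_0:t\in\R\}$ in $L^2$. By Theorem \ref{m.1}, $H_\infty$ is unitarily equivalent, via some unitary $V$ on $L^2$, to the diagonal harmonic oscillator $\sum_{j=1}^d\nu_j^\infty(x_j^2-\partial_{x_j}^2)$, whose eigenfunctions $\{\tilde\phi_k\}_{k\in\N^d}$ form an orthonormal basis of $L^2$ with eigenvalues $\mu_k=\sum_j\nu_j^\infty(2k_j+1)$. Writing $V\varphi_0=\sum_k a_k\tilde\phi_k$ with $\sum_k|a_k|^2<\infty$, one has $e^{-\im tH_\infty}\varphi_0=V^*\sum_k a_k e^{-\im t\mu_k}\tilde\phi_k$. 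For every $\eta>0$, choose $N$ with $\sum_{|k|>N}|a_k|^2<\eta^2$: the orbit lies, uniformly in $t$, within distance $\eta$ of the set $\{V^*\sum_{|k|\leq N}a_k e^{-\im t\mu_k}\tilde\phi_k:t\in\R\}$, which is bounded inside a finite-dimensional subspace and therefore relatively compact. This proves total boundedness, hence precompactness, of the $e^{-\im tH_\infty}$-orbit.

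Finally, I would transport this precompactness through the unitary family $U^*_\om(\theta)$. Property (i) of Theorem \ref{m.1} gives strong continuity of $\theta\mapsto U_\om(\theta)$ on $L^2$; unitarity then yields strong continuity of $\theta\mapsto U^*_\om(\theta)$, and the uniform bound $\|U^*_\om(\theta)\|_{\cL(L^2)}=1$ upgrades separate continuity to joint continuity of $\Psi:\T^n\times L^2\to L^2$, $(\theta,\phi)\mapsto U^*_\om(\theta)\phi$, via the splitting
$$
\|U^*_\om(\theta_1)\phi_1-U^*_\om(\theta_2)\phi_2\|\leq\|\phi_1-\phi_2\|+\|U^*_\om(\theta_1)\phi_2-U^*_\om(\theta_2)\phi_2\|.
$$
The product of the relatively compact sets $\overline{\{\om t\bmod 2\pi:t\in\R\}}\subset\T^n$ and $\overline{\{e^{-\im tH_\infty}\varphi_0:t\in\R\}}\subset L^2$ is compact in $\T^n\times L^2$; its image under $\Psi$ is compact and contains $\cO$, which is therefore precompact. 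The only slightly delicate verification is the joint continuity of $\Psi$, but this reduces cleanly to property (i) together with unitarity; the rest is a direct consequence of the Floquet decomposition and the discreteness of the spectrum of $H_\infty$.
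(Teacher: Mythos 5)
Your proof is correct, and it takes a genuinely different route from the paper's. The paper reduces by density to $\psi\in\cH^s$ with $s>d/2$, expands the solution $\psi(t)$ in the \emph{original} Hermite basis $\{\psi_k\}$ of $H_0$, and uses the dynamical localization estimate \eqref{ineq:dynquant} (itself a consequence of the uniform Sobolev bound \eqref{unest}) to truncate the tail uniformly in time; the finite-mode part then lives in a fixed finite-dimensional space. You instead exploit the Floquet decomposition \eqref{decom} directly: you first prove precompactness of the orbit $\{e^{-\im tH_\infty}\varphi_0\}$ for \emph{any} $\varphi_0\in L^2$, using the pure point spectrum of $H_\infty$ and a truncation in its own eigenbasis (no regularity of $\varphi_0$ and hence no density argument is needed), and then transport this precompactness through the map $(\theta,\phi)\mapsto U_\om^*(\theta)\phi$, whose joint continuity you correctly deduce from the strong continuity in item $(i)$ of Theorem \ref{m.1} together with unitarity, using compactness of $\T^n$. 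Your argument is effectively an explicit, self-contained version of the alternative proof the paper only alludes to after Corollary \ref{r.flo.1} via the Enss--Veseli\'c equivalence (pure point Floquet spectrum $\Longleftrightarrow$ every state is bounded); it has the advantage of working immediately for all $\psi\in L^2$, whereas the paper's route is shorter given that \eqref{ineq:dynquant} has already been established and is also quantitative (it controls the truncation error in terms of $\norm{\psi_0}_s$).

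One small point worth making explicit: strong continuity of $\theta\mapsto U_\om^*(\theta)$ is not literally item $(i)$, which concerns $U_\om(\theta)$; but for a strongly continuous family of unitaries $U(\theta)$ one has $\norm{U^*(\theta)\phi-U^*(\theta_0)\phi}=\norm{\phi-U(\theta)U^*(\theta_0)\phi}\to\norm{\phi-U(\theta_0)U^*(\theta_0)\phi}=0$, so the adjoint family is also strongly continuous. With this observation the joint-continuity step is airtight.
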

\begin{proof}
To  prove that every state $\psi\in L^2(\R^d)$  is a bounded state for the time   evolution
 $\cU_{\epsilon,\omega}( t,0)\psi$, 
  using that $\cH^s$ is dense in $L^2(\R^d)$, it is enough to assume that $\psi\in\cH^s$, with $s>\frac{d}{2}$.
 With the notations of Remark \ref{dynquant}, we write 
 $$
 \psi(t) = \psi^{(N)}(t) + R^{(N)}(t),
 $$
 where 
 $\displaystyle{\psi^{(N)}(t) = \sum_{\vert k\vert\leq N}c_k(t)\psi_k}$ and  $\displaystyle{ R^{(N)}(t) = \sum_{\vert k\vert > N}c_k(t)\psi_k}$.\\
Take $\delta>0$. 
Applying \eqref{dynquant}, taking $N$ large enough,  we   get that  for all $t\in\R$,  $\Vert R^{(N)}(t)\Vert_0\leq \frac{\delta}{2}$. 
But $\{\psi^{(N)}(t),\;t\in\R\}$ is a subset of a finite dimensional linear space. So we get that $\{\cU_{\epsilon,\omega}( t,0)\psi:\;\; t\in\R\}$   is a precompact subset of $L^2(\R^d)$. \end{proof}

This last dynamical result is deeply connected with the spectrum of the  Floquet operator. First remark that  Theorem \ref{m.1} implies the following
\begin{corollary}
\label{r.flo.1}
The operator $U_{\omega}$ induces a unitary transformation
$L^2(\T^n)\otimes L^2(\R^d)$ which transforms the Floquet operator $K$, namely
$$
K:=-\ir \omega\cdot\frac{\partial}{\partial\theta}+ H_0+\epsilon W(\theta)
\ ,
$$
into 
$$
-\ir \omega\cdot\frac{\partial}{\partial\theta}+ H_\infty\ .
$$
Thus one has that the spectrum of $K$ is pure point and its eigenvalues are
$\lambda_j^{\infty}+\omega\cdot k$.
\end{corollary}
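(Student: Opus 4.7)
The strategy is to lift the time-dependent unitary $U_\omega(\omega t)$ of Theorem \ref{m.1} to a unitary on the extended phase space $L^2(\T^n)\otimes L^2(\R^d)\simeq L^2(\T^n;L^2(\R^d))$ and to show that it intertwines $K$ with $-\ir\omega\cdot\partial_\theta+H_\infty$.

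\textbf{Step 1: lifting $U_\omega$.} For $\omega\in\E_\epsilon$ I define $\tilde U_\omega$ on $L^2(\T^n;L^2(\R^d))$ by $(\tilde U_\omega f)(\theta):=U_\omega(\theta)f(\theta)$. Because $U_\omega(\theta)$ is unitary on $L^2(\R^d)$ for every $\theta$, one has
$$
\norma{\tilde U_\omega f}^2_{L^2(\T^n;L^2)}=\int_{\T^n}\norma{U_\omega(\theta)f(\theta)}^2_{L^2}\,d\theta=\norma{f}^2_{L^2(\T^n;L^2)},
$$
and using item (i) of Theorem \ref{m.1} (strong continuity of $\theta\mapsto U_\omega(\theta)$) together with the analogous property of its adjoint, $\tilde U_\omega$ is a well defined unitary operator, with inverse $(\tilde U_\omega^* g)(\theta)=U_\omega^*(\theta)g(\theta)$.

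\textbf{Step 2: intertwining relation.} Differentiating the identity $U_\omega(\omega t)\varphi=\psi$ of Theorem \ref{m.1} with $\varphi$ a solution of \eqref{rido} and $\psi$ a solution of \eqref{schro} gives the pointwise identity
\begin{equation*}
H_\epsilon(\theta)U_\omega(\theta)=U_\omega(\theta)H_\infty+\ir(\omega\cdot\partial_\theta U_\omega)(\theta),
\end{equation*}
valid on $\cH^{s+2}$ in view of the regularity stated in (ii)--(iii) of Theorem \ref{m.1}. Applying this to $f(\theta)$ in the core of smooth functions $\T^n\to\cH^{s+2}$ I compute
\begin{align*}
K\tilde U_\omega f(\theta) &= -\ir\omega\cdot\partial_\theta\bigl(U_\omega(\theta)f(\theta)\bigr)+H_\epsilon(\theta)U_\omega(\theta)f(\theta)\\
&= -\ir(\omega\cdot\partial_\theta U_\omega)(\theta)f(\theta)+U_\omega(\theta)(-\ir\omega\cdot\partial_\theta f)(\theta)+H_\epsilon(\theta)U_\omega(\theta)f(\theta)\\
&= U_\omega(\theta)\bigl[H_\infty f(\theta)+(-\ir\omega\cdot\partial_\theta f)(\theta)\bigr],
\end{align*}
so that $\tilde U_\omega^*K\tilde U_\omega=-\ir\omega\cdot\partial_\theta+H_\infty$ on the natural dense core. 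Both operators are essentially self-adjoint there (standard for quadratic Hamiltonians plus a constant-coefficient derivation on $\T^n$), so the identity extends to the self-adjoint closures; in particular they are unitarily equivalent.

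\textbf{Step 3: spectrum of the conjugated operator.} By Theorem \ref{m.1} (and Remark \ref{meta}), $H_\infty$ is unitarily equivalent via a metaplectic $\mathfrak{M}$ to $\sum_{j=1}^d\nu_j^\infty(x_j^2-\partial_{x_j}^2)$, whose spectrum is pure point with eigenvalues $\lambda_k^\infty:=\sum_{j=1}^d(2k_j+1)\nu_j^\infty$, $k\in\N^d$, and with the corresponding rescaled Hermite basis $\{\phi_k^\infty\}_{k\in\N^d}$ of $L^2(\R^d)$. On the product space $L^2(\T^n)\otimes L^2(\R^d)$ the family $\{e^{\ir m\cdot\theta}\otimes\phi_k^\infty\}_{(m,k)\in\Z^n\times\N^d}$ is an orthonormal basis which simultaneously diagonalizes $-\ir\omega\cdot\partial_\theta$ and $H_\infty$, with eigenvalues $\omega\cdot m$ and $\lambda_k^\infty$ respectively. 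Hence $-\ir\omega\cdot\partial_\theta+H_\infty$ is pure point with eigenvalues $\{\omega\cdot m+\lambda_k^\infty:(m,k)\in\Z^n\times\N^d\}$, and by Step 2 the same is true for $K$, with the claimed spectrum.

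\textbf{Main obstacle.} The computation itself is short; the subtle point is the domain bookkeeping in Step 2, namely justifying that the intertwining identity, derived on the dense core of smooth $\cH^{s+2}$-valued functions on $\T^n$, extends to an identity of unbounded self-adjoint operators. This is handled by the quantitative regularity estimates (ii)--(iii) of Theorem \ref{m.1}, which ensure that $\tilde U_\omega$ maps the form/operator domain of $K$ onto that of $-\ir\omega\cdot\partial_\theta+H_\infty$.
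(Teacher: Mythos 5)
The paper offers no explicit proof of Corollary~\ref{r.flo.1}: it is presented as an immediate consequence of Theorem~\ref{m.1}, with the lift-and-intertwine argument left to the reader. Your proof supplies exactly that standard argument, correctly: the lifting of $U_\omega(\theta)$ to $L^2(\T^n)\otimes L^2(\R^d)$, the intertwining identity derived by differentiating $U_\omega(\omega t)\varphi=\psi$, and the observation that $\{e^{\ir m\cdot\theta}\otimes\phi^\infty_k\}$ forms a complete orthonormal basis of eigenvectors of $-\ir\omega\cdot\partial_\theta+H_\infty$, so the spectral measure is purely atomic even though the eigenvalue set may be dense. The only slightly glossed-over point is essential self-adjointness of $K$ and of $-\ir\omega\cdot\partial_\theta+H_\infty$ on the chosen cores, which you acknowledge and which is indeed standard here; the content is consistent with what the paper regards as immediate.
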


Notice that  Enss and Veselic proved   that  the  spectrum of the Floquet operator is pure point if and only if  every state is a bounded state \cite[Theorems 2.3 and Theorem 3.2]{enve}. So Corollary \ref{r.flo.1} gives another proof of Corollary \ref{cor.din.loc}.

\vspace{2em}
\noindent{\bf Acknowledgments.} The last three authors are supported
 by ANR -15-CE40-0001-02  ``BEKAM'' of the Agence Nationale de la Recherche.


\section{Proof}\label{proof}

To start with we scale the variables $x_j$ by defining
$x_j'=\sqrt{\nu_j} x_j$ so that, defining 
$$
h_j(x_j,\xi_j):=\xi_j^2+x_j^2\ ,\quad H_j:=-\partial^2_{x_j}+x_j^2\ ,
$$
one has 
\begin{equation}
\label{hj}
h_0=\sum_{j=1}^{d}\nu_j h_j\ ,\quad H_0=\sum_{j=1}^{d}\nu_jH_j\ .
\end{equation}
\begin{remark}
Notice that  for any  positive definite quadratic Hamiltonian $h$ on $\R^{2d}$ there  exists  a symplectic basis such that 
$h=\sum_{j=1}^{d}\nu_j h_j$, with $\nu_j >0$   for  $1\leq j\leq d$ (see \cite{horm3}).
\end{remark}

For convenience in this paper  we shall consider the
Weyl quantization. The Weyl quantization of a  symbol $f$ is the operator ${\rm Op}^w(f)$, defined as usual as
$$
{\rm Op}^w(f) u(x) = \frac{1}{(2\pi)^d} \int_{y,\xi \in \R^d} \, e^{\im (x-y)\xi} \, f\left(\frac{x+y}{2},\xi\right) \, u(y) \, dy \, d\xi \ .
$$
Correspondingly we will say that an operator $F= {\rm Op}^w(f) $   is the  Weyl
operator  with Weyl symbol $f$.
Notice that for polynomials $f$   of degree at most 2 in $(x,\xi)$,  $ {\rm Op}^w(f)=f(x, D) +  const$, 
where $D= \im^{-1}\nabla_x$.

Most of the times we also use the notation $f^w(x,D):= {\rm Op}^{
  w}(f)$.  In particular, in equation (\ref{H}) $W(\omega t, x,-\ir
\partial_x)$ denotes the Weyl operator $W^w(\omega t,x,D)$.\\ Given a
Hamiltonian $\chi=\chi(x,\xi)$, we will denote by
$\phi^t_{\chi}$ the flow of the corresponding classical Hamilton
equations.

It is well known that, if $f$ and $g$ are symbols, then the operator
$-\ir[f^w(x,D);g^w(x,D)]$ admits a symbol denoted by $\left\{f;g\right\}_M$ (Moyal bracket). 
Two fundamental properties of quadratic polynomial symbols are given
by the following well known remarks.

\begin{remark}
\label{key.1}
{If $f$ or $g$ is a polynomial of
degree at most 2, then $\left\{f;g\right\}_M=\left\{f;g\right\}$,
where 
$$\left\{f;g\right\}:=\sum_{j=1}^{d}\frac{\partial f}{\partial x_j}\frac{\partial g}{\partial \xi_j}-\frac{\partial g}{\partial x_j}\frac{\partial f}{\partial \xi_j}
$$ is the Poisson Bracket of $f$
and $g$. }
\end{remark}
\begin{remark}
\label{key.2}
Let $\chi$ be a polynomial of degree at most 2, then it follows from the
previous remark that, for any Weyl operator $f^w(x,D)$, the symbol of
$e^{\ir t \chi^w(x,D)}f^w(x,D)e^{-\ir t \chi^w(x,D)}$ is $f\circ \phi_{\chi}^t$.
\end{remark}

\begin{remark}\label{key.31} If $f$ and $g$ are not quadratic
    polynomials, then $\{ f; g\}_M = \{f; g\} +${\rm lower order terms};
    similar lower order corrections would appear in the symbol of
    $e^{-\ir t \chi^w(x,D)}f^w(x,D)e^{\ir t \chi^w(x,D)}$. That is the
    reason why we restrict here to the case of quadratic
    perturbations. In order to deal with more general perturbations
    one needs further ideas which will be developed elsewhere.
\end{remark}

Next we need to know how a time dependent transformation transforms a
classical and a quantum Hamiltonian. Precisely, consider a 1-parameter
family of (Hamiltonian) functions $\chi(t,x,\xi)$ {(where $t$ is thought as an external parameter)} and denote by
$\phi^\tau( t, x,\xi)$ the time $\tau$ flow it generates, precisely the
solution of
\begin{equation}
  \label{chitau}
\frac{dx}{d\tau}=\frac{\partial \chi}{\partial \xi}(t,x,\xi)\ , \quad
\frac{d\xi}{d\tau}=-\frac{\partial \chi}{\partial x}(t,x,\xi) \ .
\end{equation}
Consider the time dependent coordinate transformation
\begin{equation}
  \label{timedep}
(x,\xi)=\phi^1(t,x',\xi'):=\left.\phi^\tau(t,x',\xi')\right|_{\tau=1}\ .
\end{equation}

\begin{remark}
  \label{key.3}
Working in the extended phase space in which time and a new momentum
conjugated to it are added, it is easy to see that the coordinate
transformation \eqref{timedep} transforms a Hamiltonian system with
Hamiltonian $h$ into a Hamiltonian system with Hamiltonian $h'$ given
by
\begin{equation}
  \label{timetras}
h'(t,x',\xi')=h(\phi^1(t,x',\xi'))-\int_0^1  \frac{\partial
  \chi}{\partial t}\left(t,\phi^\tau(t, x',\xi')\right) \, d\tau \ .
\end{equation}
\end{remark}

\begin{remark}
  \label{timetrquantum}
If the operator $\chi^w(t,x,D)$ is selfadjoint for any fixed $t$, then the
transformation
\begin{equation}
  \label{timequan}
\psi=e^{-\im\chi^w(t,x,D) }\psi'
\end{equation}
transform $\im \dot
\psi=H\psi$ into $\im \dot
\psi'=H'\psi'$ with
\begin{equation}
  \label{hqua}
H'=e^{\im\chi^w(t,x,D) }He^{-\im\chi^w(t,x,D)
}-\int_0^1e^{\im \tau \chi^w(t,x,D) } \, \Big(\partial_t \chi^w(t,x,\xi)\Big) \, e^{-\im
  \tau \chi^w(t,x,D) } \, d\tau \ .
\end{equation}
This is seen by an explicit computation. For example see Lemma 3.2 of
\cite{Bam16I}. 
\end{remark}
\noindent So in view of Remark \ref{key.2}, provided that
transformation \eqref{timequan} is well defined in the quadratic case,
the quantum transformed Hamiltonian \eqref{hqua} is the exact
quantization of the transformed classical Hamiltonian
\eqref{timetras}.

To study the analytic properties of the transformation
  \eqref{timequan} we will use the following simplified version of
  Theorem 1.2 of \cite{maro} (to which we refer for the proof). 
\begin{theorem}[\cite{maro}]
\label{thm:maro}
Let $H_0$ be the Hamiltonian of the harmonic oscillator. If $X$ is an
operator symmetric on $\cH^\infty$ such that $ X H_0^{-1} $ and
$[X, H_0] H_0^{-1}$ belong to $\cL(\cH^s)$ for any $s \geq 0$, then
the Schr\"odinger equation
$$
\im \partial_\tau \psi = X \psi  \ 
$$
is globally well posed in $\cH^s$ for any $s$, and its  unitary propagator $e^{-\im \tau X }$ belongs to $\cL(\cH^s)$, $\forall s \geq 0$. Furthermore one has the quantitative estimate
\begin{equation}
\label{q.est}
 c_s\Vert\psi\Vert_s\leq \Vert e^{-\im \tau  X}\psi\Vert_s\leq C_s\Vert\psi\Vert_s,\  \  \forall \tau\in [0,1] \ , 
\end{equation}
where the constants $c_s, C_s>0$ depend only on 
$\norm{[X,  H_0^{s}] H_0^{-s}}_{\cL(\cH^0)}$.
\end{theorem}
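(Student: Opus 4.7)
The plan is to construct the propagator $e^{-\im \tau X}$ via a Galerkin truncation combined with uniform energy estimates in each space $\cH^s$, and then pass to the limit. Let $P_N$ be the spectral projector of $H_0$ onto the finite-dimensional subspace spanned by Hermite eigenfunctions with eigenvalues $\leq N$, and set $X_N := P_N X P_N$. Because $P_N$ has finite rank and $X$ is defined on $\cH^\infty \supset \mathrm{Ran}(P_N)$, the operator $X_N$ is bounded and symmetric on $L^2$, hence self-adjoint; the propagator $U_N(\tau) := e^{-\im \tau X_N}$ is then a strongly continuous unitary group on $L^2$ which preserves every $\cH^s$ (since $P_N$ and $H_0^s$ commute).

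The heart of the proof is an energy estimate uniform in $N$. Setting $\psi_N(\tau) := U_N(\tau)\psi_0$ and $\phi := P_N \psi_N(\tau)$, the identity $[H_0^s, X_N] = P_N [H_0^s, X] P_N$ and the symmetry of $X_N$ give
\begin{equation*}
\frac{d}{d\tau} \|\psi_N(\tau)\|_s^2 \;=\; 2\,\Im \bigl\langle H_0^{s/2} X\phi,\, H_0^{s/2}\phi\bigr\rangle \ ,
\end{equation*}
which is real (as it must be, since $[X,H_0^s]$ is anti-self-adjoint). Writing $H_0^{s/2} X = X H_0^{s/2} + [H_0^{s/2},X]$ and setting $\eta := H_0^{s/2}\phi$, the symmetric piece $\langle X\eta, \eta\rangle$ is real and drops out of the imaginary part, leaving
\begin{equation*}
\left|\frac{d}{d\tau}\|\psi_N(\tau)\|_s^2\right| \;\leq\; 2\,\bigl\|[X,H_0^{s/2}] H_0^{-s/2}\bigr\|_{\cL(L^2)}\,\|\psi_N(\tau)\|_s^2 \ .
\end{equation*}
This commutator norm is finite by the hypotheses: for integer exponents one expands $[X,H_0^k] H_0^{-k} = \sum_{j=0}^{k-1} H_0^j\,([X,H_0] H_0^{-1})\,H_0^{-j}$, and the assumption $[X,H_0] H_0^{-1} \in \cL(\cH^s)$ for all $s \geq 0$ makes each summand bounded on $L^2$; non-integer exponents follow by interpolation. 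Gronwall then yields the upper bound in \eqref{q.est} uniformly in $N$, and the matching lower bound follows by applying the same argument to $-X_N$ and combining with $L^2$-unitarity of $U_N(\tau)$.

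To pass to the limit, the estimate $\|(\uno-P_N)\psi\|_{L^2} \lesssim N^{-s/2}\|\psi\|_s$ together with $X H_0^{-1} \in \cL(L^2)$ yields $\|(X-X_N)\psi\|_{L^2} \lesssim N^{-s/2}\|\psi\|_{s+2}$. A Duhamel computation combined with the uniform $\cH^{s+2}$-bound on $U_M(\tau)\psi_0$ shows that $\{U_N(\tau)\psi_0\}_N$ is Cauchy in $L^2$ for $\psi_0\in\cH^\infty$, and, by density plus the uniform $\cH^s$-bounds, for every $\psi_0\in\cH^s$. The limit $U(\tau)$ inherits unitarity, the group law and strong continuity; Stone's theorem identifies it as $e^{-\im\tau A}$ with $A$ self-adjoint, and the strong convergence $-\im X_N\psi\to -\im X\psi$ on $\cH^\infty$ shows $A$ is the self-adjoint closure of $X$. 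The estimate \eqref{q.est} passes to the limit by lower semicontinuity.

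The main obstacle is the energy estimate. The naive bound $|\langle\phi,[X,H_0^s]\phi\rangle| \leq \|[X,H_0^s] H_0^{-s}\|_{\cL(L^2)}\,\|\phi\|\,\|\phi\|_{2s}$ is useless for Gronwall on $\|\phi\|_s^2$, since $\|\phi\|\,\|\phi\|_{2s} \geq \|\phi\|_s^2$ by Cauchy--Schwarz \emph{in the opposite direction}. Exploiting the anti-self-adjointness of $[X,H_0^s]$ to split $H_0^s = H_0^{s/2}\cdot H_0^{s/2}$ symmetrically, so that the symmetric ``diagonal'' contribution $\langle X\eta,\eta\rangle$ is real and drops out, is precisely what isolates the half-commutator $[H_0^{s/2},X] H_0^{-s/2}$ as a bounded operator on $L^2$ acting on $\eta=H_0^{s/2}\phi$ and produces exactly the $\|\phi\|_s^2$ scaling needed to close Gronwall.
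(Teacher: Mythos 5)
The paper does not give a proof of this statement: it is quoted as a simplified form of Theorem~1.2 of \cite{maro}, and the reader is explicitly referred there. So there is no in-paper argument to compare your attempt against; I can only assess it on its own terms, and on those terms it is correct. Your proof isolates the one genuinely nontrivial point: in the energy identity for $\|\psi_N(\tau)\|_s^2$, the symmetric splitting $H_0^{s/2}X = XH_0^{s/2} + [H_0^{s/2},X]$ makes the piece $\langle X\eta,\eta\rangle$ (with $\eta = H_0^{s/2}\phi$, $\phi = P_N\psi_N$) real, so it disappears from the imaginary part, leaving precisely the bounded operator $[X,H_0^{s/2}]H_0^{-s/2}$ on $L^2$ tested against $\|\eta\|^2 = \|\phi\|_s^2 \leq \|\psi_N\|_s^2$. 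This is the scaling needed to close Gronwall, whereas the naive estimate via $\|\phi\|\,\|\phi\|_{2s}$ fails, as you correctly point out. The reduction of the half-commutator bound to the hypothesis $[X,H_0]H_0^{-1}\in\cL(\cH^s)$ via the telescoping identity for integer powers and interpolation for fractional ones, the Duhamel argument for Cauchy-ness of $U_N(\tau)\psi_0$ in $L^2$, and the lower-semicontinuity step to propagate the $\cH^s$ bound to the limit are all sound.

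Two minor remarks. First, in the density step extending $L^2$-Cauchy-ness from $\psi_0\in\cH^\infty$ to $\psi_0\in\cH^s$, what you actually use on the error terms $\|U_N(\tau)(\psi_0-\psi_0')\|_{L^2}$ is the $L^2$-unitarity of $U_N(\tau)$, not the uniform $\cH^s$-bound (the latter is what feeds lower semicontinuity); the phrasing is slightly off but the argument is fine. Second, your Gronwall constant is controlled by $\|[X,H_0^{s/2}]H_0^{-s/2}\|_{\cL(\cH^0)}$ rather than by $\|[X,H_0^{s}]H_0^{-s}\|_{\cL(\cH^0)}$ as advertised in the statement; both are finite under the hypotheses (same telescoping argument), so nothing breaks, but the normalization of the constant does not literally match the one claimed.
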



The properties of the transformation are
  given by the next lemma and are closely related to the standard
  properties on the smoothness in time of the semigroup generated by an
  unbounded operator.

\begin{lemma}
\label{lem:flow} 
Let $\chi(\rho, x, \xi)$ be a polynomial in $(x, \xi)$ of degree at
most 2 with real coefficients depending in a $C^{\infty}$  way on $\rho
\in \R^n$. Then $\forall \rho\in \R^n$, the operator $\chi^w(\rho, x,D)$ is
selfadjoint in $L^2(\R^d)$. Furthermore $\forall s \geq 0$, $\forall
\tau \in \R$ the following holds true:
\begin{itemize}
\item[(i)] the map $\rho \mapsto e^{-\ir \tau  \chi^w(\rho,x,D)} \in 
 C^0(\R^n, \, \cL(\cH^{s+2},\cH^s)) $.
\item[(ii)] $\forall \psi \in \cH^s$, the map $\rho \mapsto  e^{-\ir \tau  \chi^w(\rho,x,D)}\psi \in 
 C^0(\R^n, \, \cH^s)$.
 \item[(iii)] $\forall r \in \N$   the map $\rho \mapsto e^{-\ir \tau  \chi^w(\rho,x,D)} \in 
 C^r(\R^n, \, \cL(\cH^{s+4r+2},\cH^s)) $.
 \item[(iv)] If the coefficients of $\chi(\rho,x,\xi)$ are  uniformly bounded  in $\rho\in\R^n$  then for any $s>0$ there exist $c_s>0, C_s>0$ such that  we have
 $$
 c_s\Vert\psi\Vert_s\leq \Vert e^{-\ir \tau  \chi^w(\rho,x,D)}\psi\Vert_s\leq C_s\Vert\psi\Vert_s,\ \ \forall \rho\in\R^n, \forall \tau\in [0,1].
 $$
\end{itemize}
\end{lemma}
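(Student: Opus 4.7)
The plan is to reduce all four claims to applications of Theorem \ref{thm:maro} combined with iterated Duhamel expansions, exploiting the exact Poisson-bracket rule (Remark \ref{key.1}) which preserves polynomial structure of degree $\le 2$.

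First I would verify that for each fixed $\rho$ the operator $X(\rho):=\chi^w(\rho,x,D)$ satisfies the hypotheses of Theorem \ref{thm:maro}. Since $\chi$ is a real polynomial of degree at most $2$ in $(x,\xi)$, $X(\rho)$ is symmetric on $\cH^\infty$ and maps $\cH^{s+2}\to\cH^s$, so $X(\rho)H_0^{-1}\in\cL(\cH^s)$. By Remark \ref{key.1} the commutator $[X(\rho),H_0]$ is the Weyl quantization of $\ir\{\chi,H_0\}$, which is again a real polynomial of degree $\le 2$, so $[X(\rho),H_0]H_0^{-1}\in\cL(\cH^s)$. Theorem \ref{thm:maro} then yields selfadjointness, the unitary propagator $e^{-\ir\tau X(\rho)}\in\cL(\cH^s)$ for every $s\ge 0$, and the bound \eqref{q.est} for $\tau\in[0,1]$; the group property extends it to all $\tau\in\R$. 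If the coefficients of $\chi(\rho,\cdot)$ are uniformly bounded in $\rho$, so is $\|[X(\rho),H_0^s]H_0^{-s}\|_{\cL(\cH^0)}$, hence the constants in \eqref{q.est} can be chosen independently of $\rho$. This proves (iv).

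For (i) I would use the Duhamel identity
\begin{equation*}
e^{-\ir\tau X(\rho_1)}-e^{-\ir\tau X(\rho_2)} = -\ir\int_0^\tau e^{-\ir(\tau-\sigma)X(\rho_1)}\bigl(X(\rho_1)-X(\rho_2)\bigr)e^{-\ir\sigma X(\rho_2)}\,d\sigma.
\end{equation*}
Since $\chi$ is $C^\infty$ in $\rho$, the difference $X(\rho_1)-X(\rho_2)$ is a polynomial of degree $\le 2$ whose coefficients vanish as $\rho_1\to\rho_2$, so it tends to $0$ in $\cL(\cH^{s+2};\cH^s)$; combined with the uniform boundedness on $\cH^s$ of the outer propagators (from (iv) applied locally in $\rho$), this proves (i). Claim (ii) is then a density argument: given $\psi\in\cH^s$, approximate by $\psi_n\in\cH^{s+2}$, apply (i) to $\psi_n$, and use the uniform bound on $\|e^{-\ir\tau X(\rho)}\|_{\cL(\cH^s)}$ from (iv) to exchange the limits.

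Claim (iii) follows by induction on $r$. Formal differentiation of the propagator yields
\begin{equation*}
\partial_{\rho_j}e^{-\ir\tau X(\rho)} = -\ir\int_0^\tau e^{-\ir(\tau-\sigma)X(\rho)}\,(\partial_{\rho_j}X)(\rho)\,e^{-\ir\sigma X(\rho)}\,d\sigma,
\end{equation*}
and this formula is justified rigorously by the same Duhamel argument as in (i), with $X(\rho_1)-X(\rho_2)$ replaced by the increment $h^{-1}(X(\rho+he_j)-X(\rho))-\partial_{\rho_j}X(\rho)$, whose norm in $\cL(\cH^{s+2};\cH^s)$ tends to $0$ by smoothness of the coefficients of $\chi$ in $\rho$. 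Iterating this computation, the $r$-th derivative is expressible as a finite sum of multiple time integrals of products of up to $r+1$ propagators interleaved with polynomial factors $\partial_\rho^{\alpha}X(\rho)$ whose total order is $r$; each polynomial factor costs two units of regularity, and upgrading boundedness to continuity in $\rho$ costs additional units via the telescoping-difference scheme already used in (i). A careful bookkeeping shows that the cumulated loss fits into $4r+2$ derivatives. The only delicate technical point is this bookkeeping of regularity losses; everything else is an immediate consequence of Theorem \ref{thm:maro} and the exactness of the Moyal/Poisson bracket rule for polynomials of degree $\le 2$.
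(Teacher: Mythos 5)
Your proposal is correct and follows essentially the same route as the paper: verify the hypotheses of Theorem \ref{thm:maro} (selfadjointness, $\chi^w H_0^{-1}$ and $[\chi^w,H_0]H_0^{-1}$ bounded) to get (iv), then use the Duhamel formula plus the local uniform bounds to get (i), a density argument for (ii), and iterated Duhamel/induction on $r$ for (iii). Your version is if anything slightly more explicit than the paper's (which leaves the $4r+2$ bookkeeping for (iii) entirely to the reader), but the key lemma, the decomposition, and the chain of reductions are identical.
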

\begin{proof}
First we remark that in this lemma the quantity $\rho$ plays
  the role of a parameter. Since $\chi(\rho,x,\xi)$ is a real valued
  polynomial in $(x, \xi)$ of degree at most 2, the operator
  $\chi^w(\rho, x, D)$ is selfadjoint in $L^2(\R^d)$, so $\forall \rho \in
  \R^n$ the propagator $ e^{-\im \tau \chi^w(\rho, x, D)}$ is unitary on
  $L^2(\R^d)$. \\ To show that $e^{-\im \tau \chi^w(\rho, x, D)}$ maps
  $\cH^s$ to itself, $\forall s >0$, $\forall \rho \in \R^n$, we apply
  Theorem \ref{thm:maro}. Indeed since $\chi^w(\rho, x, D)$ has a
  polynomial symbol, then $\chi^w(\rho, x, D) H_0^{-1}$ and the
  commutator $[H_0, \chi^w(\rho, x, D)]H_{0}^{-1}$ belong to
  $\cL(\cH^s)$, $\forall s \geq 0$. Item $(iv)$ follows by estimate
  \eqref{q.est} and the remark that $\norm{ [H_0^s, \chi^w(\rho, x,
      D)]H_{0}^{-s}}_{\cL(\cH^0)}$ is bounded uniformly in
  $\rho$.\\ To prove item $(i)$ we use the Duhamel formula 
\begin{equation}
   \label{duhamel}
   e^{-\im \tau B} -e^{-\im \tau A} =  \im \int_0^\tau e^{-\im (\tau-\tau_1)A}\, (A-B)\, e^{-\im \tau_1 B} \, d\tau_1 \ .
   \end{equation}
   Then choosing $B=\chi^w(\rho + \rho',x,D)$, $A =\chi^w(\rho,x,D)$ one has that $\forall \, 0 \leq \tau \leq 1$
   $$
  \norm{  e^{-\im \tau \chi^w(\rho + \rho',x,D) } -e^{-\im \tau \chi^w(\rho,x,D)} }_{\cL(\cH^{s+2}, \cH^s)}
   \leq C \norm{\chi^w(\rho + \rho',x,D) - \chi^w(\rho,x,D)}_{\cL(\cH^{s+2}, \cH^s)} \ . 
   $$
   This proves item $(i)$. 
   Continuity in item $(ii)$ is deduced by $(i)$ with a standard
   density argument.
Finally item $(iii)$ is  proved by induction on $r$ again using  the Duhamel formula \eqref{duhamel}. 
\end{proof}

Remark \ref{key.3}, Remark \ref{timetrquantum} and Lemma \ref{lem:flow} imply  the following important proposition.

\begin{proposition}
  \label{prop:flow}
Let $\chi(t,x,\xi)$ be a polynomial of degree at most 2 in $x$ and
$\xi$ with smooth time dependent coefficients. If the transformation
\eqref{timedep} transforms a classical system  with Hamiltonian $h$ into a
Hamiltonian system with Hamiltonian $h'$, then the transformation
\eqref{timequan} transforms the quantum system with Hamiltonian $h^w$
into the quantum system with Hamiltonian $(h')^w$.
\end{proposition}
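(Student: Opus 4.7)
The plan is to combine the three ingredients assembled just above the statement: the classical transformation formula \eqref{timetras} from Remark \ref{key.3}, the quantum transformation formula \eqref{hqua} from Remark \ref{timetrquantum}, and the exact Egorov-type identity of Remark \ref{key.2}. Lemma \ref{lem:flow} will be invoked only to guarantee that every object appearing below is a well-defined bounded operator between suitable Sobolev spaces, so that all manipulations make sense.

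First, I would apply Remark \ref{timetrquantum} to obtain
\begin{equation*}
H'= e^{\im \chi^w(t,x,D)}\, h^w\, e^{-\im\chi^w(t,x,D)}-\int_0^1 e^{\im\tau\chi^w(t,x,D)}\bigl(\partial_t\chi\bigr)^w(t,x,D)\, e^{-\im\tau\chi^w(t,x,D)}\,d\tau,
\end{equation*}
and in parallel recall from Remark \ref{key.3} that the classical transformed Hamiltonian is
\begin{equation*}
h'(t,x',\xi')=h\bigl(\phi^1(t,x',\xi')\bigr)-\int_0^1 \partial_t\chi\bigl(t,\phi^\tau(t,x',\xi')\bigr)\,d\tau.
\end{equation*}
The goal is to match the two expressions term by term at the level of Weyl symbols.

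The crux of the argument is Remark \ref{key.2}, which I will use twice. Since $\chi(t,\cdot,\cdot)$ is a polynomial of degree at most $2$, the operator $e^{\im\tau\chi^w(t,x,D)}\, f^w(t,x,D)\, e^{-\im\tau\chi^w(t,x,D)}$ has Weyl symbol exactly equal to $f\circ\phi^\tau$ for any symbol $f$, with no remainder term. Applying this with $f=h$ at $\tau=1$ handles the first summand. For the integrand, I note that $\partial_t\chi$ is again a polynomial in $(x,\xi)$ of degree at most $2$ (differentiation in $t$ only affects the coefficients), so Remark \ref{key.2} applies equally well and identifies the symbol of the conjugate with $(\partial_t\chi)\circ\phi^\tau$ for every $\tau\in[0,1]$.

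It then remains to commute the integral over $\tau$ with the Weyl quantization map, which is a standard Fubini/continuity step once one checks that $\tau\mapsto e^{\im\tau\chi^w}(\partial_t\chi)^w e^{-\im\tau\chi^w}$ is continuous into $\cL(\cH^{s+2},\cH^s)$; this is furnished by Lemma \ref{lem:flow}(i) together with the fact that $(\partial_t\chi)^w\in\cL(\cH^{s+2},\cH^s)$. Comparing the resulting symbol with the expression for $h'$ yields $H'=(h')^w$, as claimed. The only mildly delicate point is the interchange of integration and Weyl quantization together with the verification that the selfadjointness and mapping properties required by Remark \ref{timetrquantum} and Remark \ref{key.2} hold uniformly in $t$; both are ensured by Lemma \ref{lem:flow}, so I do not anticipate any genuine obstacle.
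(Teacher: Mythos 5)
Your argument is correct and is exactly the route the paper takes: the paper does not spell out a proof but asserts that Proposition \ref{prop:flow} follows from Remark \ref{key.3}, Remark \ref{timetrquantum}, and Lemma \ref{lem:flow}, with Remark \ref{key.2} supplying the exact Egorov identity — precisely the ingredients you combine, in the same order, to match \eqref{hqua} with \eqref{timetras} symbol by symbol.
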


{As a consequence, for quadratic Hamiltonians, the quantum KAM
  theorem will follow from the corresponding classical KAM theorem.}

To give the needed result,
consider the classical time dependent Hamiltonian
\begin{equation}
\label{ham.cla}
 h_\epsilon(\omega t,  x,\xi) :=  \sum_{1\leq j\leq d}
 \nu_j\frac{x_j^2+\xi_j^2}{2}+\epsilon\, W(\omega t, x,\xi) \ ,
\end{equation}
with $W$ as in the introduction.  The following KAM theorem holds.
 \begin{theorem}
\label{KAMclassico} Assume that $\nu_j\geq\nu_0>0$ for $j=1,\cdots,d$
and that $\T^n \times \R^{d} \times \R^d\ni(\theta, x,\xi)\mapsto
W(\theta, x,\xi)\in\R$ is a   polynomial in $(x,\xi)$ of degree at
most 2 with coefficients which are real analytic functions of $\theta\in \T^n$.\\ 
Then there exists $\epsilon_*>0$ and $C>0$, such that for
$|\epsilon|<\epsilon_*$ the following holds true:
\begin{itemize}
\item[(i)] there
exists a closed set $\E_\epsilon \subset (0,2\pi)^n$ with $\meas ((0,2\pi)^n\setminus \E_\epsilon )\leq C \eps^{\frac{1}{9}}$ ;
\item[(ii)] for any $\om\in\E_\eps$, there exists an analytic map $\theta\mapsto
 A_{\om}(\theta) \in {\rm sp}(2d) $ (symplectic
 algebra\footnote{{recall that a real $2d\times 2d$ matrix $A$ belongs
   to ${\rm sp}(2d)$ iff $JA$  is symmetric}} of dimension
 $2d$) and an analytic map $\theta\mapsto V_\om(\theta)\in\R^{2d} $,
 such that the change of coordinates
 \begin{equation}
   \label{kamclach}
(x',\xi')=e^{A_{\om}(\omega t)}(x,\xi)+ V_\om(\omega t) 
 \end{equation}
conjugates the Hamiltonian equations of \eqref{ham.cla} to the Hamiltonian equations of a homogeneous polynomial $h_\infty(x,
\xi)$ of degree 2 which is positive definite. Finally both $A_{\om}$
and $V_\om$ are $\epsilon$ close to zero.
\end{itemize}
Furthermore  $h_\infty$ can be diagonalized: there exists a matrix
 $\cP\in {\rm Sp}(2d)$ (symplectic group of dimension $2d$) such that,
  denoting $(y,\eta) =\cP(x,\xi)$ we have
\begin{equation}
\label{fin}h_\infty\circ \cP^{-1}(y,\eta)=\sum_{j=1}^d \nu_j^\infty (y_j^2+\eta_j^2)\end{equation}
where $\nu_j^{\infty}=\nu_j^{\infty}(\om)$ are defined on
$\E_\eps$ and fulfill the estimates 
\begin{equation}
\label{freq:est}
|\nu_j^{\infty}-\nu_j|\leq C\eps,\quad j=1,\cdots,d.
\end{equation}
\end{theorem}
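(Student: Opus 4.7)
The plan is to run a classical KAM iteration in the class of polynomials of degree at most two in $z=(x,\xi)$, which is preserved by the Hamiltonian flow of any such generating function; correspondingly all changes of variables will be affine symplectic, produced via Remark \ref{key.3}. At fixed $\theta\in\T^n$ the problem is therefore genuinely finite-dimensional, and the task reduces to a reducibility statement for an affine quasi-periodic symplectic cocycle. To set it up, decompose $W(\theta,z)=W_2(\theta,z)+W_1(\theta,z)+W_0(\theta)$ by homogeneity in $z$; the constant $W_0$ does not affect the dynamics, and Hamilton's equations take the affine form $\dot z=(L_0+\epsilon L_2(\omega t))z+\epsilon\, b(\omega t)$ with $L_0=J\,\diag(\nu_j,\nu_j)$, $L_2(\theta)\in{\rm sp}(2d)$ and $b(\theta)\in\R^{2d}$ real-analytic on $\T^n$. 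The reducer \eqref{kamclach} will be built as the composition of affine symplectic maps, each generated by a polynomial $\chi_n=\chi_n^{(2)}+\chi_n^{(1)}$ of degree at most two in $z$.

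At the $n$-th stage the Hamiltonian reads $h_n(\theta,z)=h_{n,0}(z)+\epsilon_n P_n(\theta,z)$, with $h_{n,0}$ a time-independent positive-definite real quadratic form of frequencies $\nu^{(n)}=(\nu_1^{(n)},\ldots,\nu_d^{(n)})$, and $P_n$ of degree at most two in $z$ and real-analytic in $\theta$ on a strip. The generator $\chi_n$ is sought as the solution of the homological equation
\begin{equation*}
\omega\cdot\partial_\theta\chi_n+\{h_{n,0},\chi_n\}=P_n-[P_n],
\end{equation*}
where $[P_n]$ denotes the projection onto the kernel of the left-hand side, which is absorbed into $h_{n+1,0}$. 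In the complex symplectic variables $z_j^\pm=(x_j\mp\ir\xi_j)/\sqrt 2$, every monomial $z^\alpha\bar z^\beta e^{\ir k\cdot\theta}$ with $|\alpha|+|\beta|\in\{1,2\}$ diagonalises the operator $\omega\cdot\partial_\theta+\{h_{n,0},\cdot\}$ with eigenvalue $\ir(k\cdot\omega+(\alpha-\beta)\cdot\nu^{(n)})$; solvability therefore requires the first and second Melnikov-type conditions
\begin{equation*}
\bigl|k\cdot\omega+m\cdot\nu^{(n)}\bigr|\geq\frac{\gamma_n}{\langle k\rangle^\tau},\qquad 0\neq k\in\Z^n,\ |k|\leq K_n,\ m\in\Z^d,\ |m|\leq 2.
\end{equation*}
Under these, the time-one flow of $\chi_n$ is an affine symplectic map $z\mapsto e^{A_n(\theta)}z+V_n(\theta)$ of size $O(\epsilon_n\gamma_n^{-1}K_n^\tau)$ which conjugates $h_n$ to $h_{n+1}$ with $\epsilon_{n+1}\lesssim\epsilon_n^2$ after an ultraviolet cutoff $K_n\sim|\log\epsilon_n|^{\tau'}$, and $\nu^{(n+1)}=\nu^{(n)}+O(\epsilon_n)$; super-exponential convergence then follows in the standard fashion.

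The composition of the first $n$ KAM steps converges uniformly on shrinking complex strips in $\theta$ to an affine symplectic map of the form \eqref{kamclach} with $A_\omega,V_\omega=O(\epsilon)$, which conjugates $h_\epsilon$ to a time-independent positive-definite real quadratic form $h_\infty$; positive definiteness is preserved throughout because each correction to $h_{n,0}$ has size $\epsilon_n$ and $\sum_n\epsilon_n<\infty$. Williamson's normal form theorem then furnishes the linear symplectic $\cP\in{\rm Sp}(2d)$ producing \eqref{fin}, with $|\nu_j^\infty-\nu_j|\leq\sum_n|\nu_j^{(n+1)}-\nu_j^{(n)}|\lesssim\epsilon$ giving \eqref{freq:est}. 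For the measure estimate, the Lipschitz bound $\nu^{(n)}(\omega)-\nu=O(\epsilon)$ implies $\partial_\omega\bigl(k\cdot\omega+m\cdot\nu^{(n)}(\omega)\bigr)=k+O(\epsilon)$, transverse whenever $k\neq 0$; each resonant slab thus has measure $\lesssim\gamma_n\langle k\rangle^{-\tau-1}$, and summing over $k\neq 0$, $|m|\leq 2$ and $n$ with the choice $\gamma=\epsilon^{1/9}$ yields $\meas((0,2\pi)^n\setminus\E_\epsilon)\leq C\epsilon^{1/9}$.

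The essential obstacle is the residual Melnikov conditions with $k=0$ and $m\neq 0$: for such $(k,m)$ the parameter $\omega$ is absent and the small divisor $m\cdot\nu^{(n)}$ may vanish identically (as in the fully resonant case $\nu_1=\cdots=\nu_d$), so no excision of $\omega$ can help. The resolution, which is the whole reason the scheme closes in the arbitrary-frequency setting, is to retain every resonant monomial with $k=0$ and $m\cdot\nu=0$ inside the normal form $h_{n+1,0}$ at every step; consequently $h_{n,0}$ is in general only block-diagonal along the resonant directions of $\nu$, not diagonal, and the final linear symplectic $\cP$ that diagonalises $h_\infty$ need not be close to the identity, consistently with Remark \ref{meta}.
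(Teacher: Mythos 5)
Your overall strategy matches the paper's: reduce the classical reducibility claim to a finite-dimensional KAM iteration on quadratic/affine Hamiltonians, split the perturbation by degree, solve a truncated homological equation, absorb the un-solvable $k=0$ resonant part into the normal form, and finish with a Williamson-type symplectic diagonalisation and a Lipschitz-in-$\omega$ measure estimate. You also correctly identify the crux — the $k=0$, $m\cdot\nu=0$ divisors that cannot be excised in $\omega$ — and the correct cure (enlarge the normal form).

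There is, however, a genuine inconsistency in your execution of the homological step, and it is precisely the point where the paper expends its technical effort. You claim that the monomials $z^\alpha\bar z^\beta e^{\ir k\cdot\theta}$ diagonalise $\omega\cdot\partial_\theta+\{h_{n,0},\cdot\}$ with eigenvalue $\ir(k\cdot\omega+(\alpha-\beta)\cdot\nu^{(n)})$, but at the same time you concede that $h_{n,0}$ is, after the first step, only block-Hermitian in the original basis (since you have absorbed the $k=0$ resonant terms). Once $h_{n,0}$ is not diagonal, the monomial basis no longer diagonalises $\{h_{n,0},\cdot\}$, and your eigenvalue formula, your identification of the kernel projection $[P_n]$, and your small-divisor structure $k\cdot\omega+m\cdot\nu^{(n)}$ all need to be recast in a $\omega$- and $n$-dependent eigenbasis of the Hermitian form; you would then have to track the $\omega$-regularity of this eigenbasis through the iteration, which you do not address. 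The paper avoids this circle entirely by working at the level of operators on matrices: the normal form is $\langle z,N(\omega)\bar z\rangle$ with $N$ a general Hermitian matrix, the entire $k=0$ Hermitian block $\hat Q_0^{z\bar z}$ (not only its resonant part) is placed into $\tilde N$, and the remaining homological equations $L_k X=\ir\hat Q_k$ involve the manifestly self-adjoint operators $L_k(\omega)\colon M\mapsto k\cdot\omega\,M-[N(\omega),M]$ (and their analogues for the $zz$ and linear blocks, whose $k=0$ divisors are bounded below by $2\nu_0$ and $\nu_0/2$ and cause no trouble). The solvability bound $\|L_k^{-1}\|\leq\kappa^{-1}$ and, crucially, the measure estimate then come from Kato's analytic perturbation theory for Hermitian families: the eigenvalues of $L_k(\omega)$ admit a holomorphic/Lipschitz parametrisation, and the Hellmann--Feynman derivative $\partial_\omega\lambda_k\cdot\tfrac{k}{|k|}=\langle v,(\partial_\omega L_k\cdot\tfrac{k}{|k|})v\rangle\geq\tfrac12$ gives the transversality you invoke, but without needing the normal form to be diagonal. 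You should replace your monomial picture by this operator-theoretic one (or, equivalently, conjugate to the eigenbasis of $N$ at each step and argue that this does not damage the estimates), and you should state explicitly that the full $k=0$ Hermitian block is retained as normal form, since only the resonant part would leave a residual small divisor $\nu_i^{(n)}-\nu_j^{(n)}$ that is nonzero but can be tiny for nearly degenerate $\nu$. With these corrections your argument becomes the paper's argument.

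Two smaller remarks. First, your final-step diagonalisation via Williamson's theorem and the conclusion that $\cP$ need not be near the identity in the resonant case are both correct and match the paper. Second, your measure bookkeeping (Lipschitz transversality in $\omega$, exclusion strips of width $\gamma_n\langle k\rangle^{-\tau}$, summation over $k,m,n$ with $\gamma\sim\epsilon^{1/9}$) is morally the same as the paper's truncated version ($|k|\leq K_m$, measure loss $K_m^n\kappa_m$ with $\kappa_m=\epsilon_{m-1}^{1/8}$, $K_m\sim\log\epsilon_{m-1}^{-1}$); either bookkeeping yields $C\epsilon^{1/9}$, so this is only a stylistic difference.
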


 \begin{remark}
   \label{small}
In general, the matrix $\cP$ is not close to
identity.  
However, in case the frequencies $\nu_j$ are non resonant, then $\cP = \uno$.
 \end{remark}
 
KAM theory in finite dimensions is nowadays standard. In particular we
believe that Theorem \ref{KAMclassico} can be obtained combining the
results of \cite{eliasson88,you}. However, for the reader convenience
and the sake of being self-contained, we add in Section \ref{2} its
proof.

Theorem  \ref{m.1} follows immediately combining the results of 
Theorem  \ref{KAMclassico} and Proposition  \ref{prop:flow}. 
\begin{proof}[Proof of Theorem \ref{m.1}]
We see easily that the change of coordinates \eqref{kamclach} has the form \eqref{timedep}  with an Hamiltonian $\chi_\omega(\omega t,x,\xi)$
which is a polynomial in $(x, \xi)$ of degree at most
2 with real, smooth and  uniformly bounded  coefficients in $t\in\R$.\\
Define $U_\omega(\omega t) = e^{-\im\chi_\omega^w(\omega t, x, D)}$.  
By Proposition \ref{prop:flow} it conjugates the original equation \eqref{schro}  to \eqref{rido}  where $H_\infty := {\rm Op}^w(h_\infty)$. 
\\
{Furthermore $\theta \mapsto U_\omega(\theta)$  fulfills
  $(i)$--$(iv)$ of Lemma \ref{lem:flow}, from which it follows immediately that $\theta \mapsto U_\omega(\theta)$ fulfills 
item $(i)$, $(iii)$ of Theorem \ref{m.1}. Concerning item $(ii)$, by Taylor formula the quantity $\norm{\uno - U_\omega(\theta)}_{\cL(\cH^{s+2}, \cH^s)}$ is controlled by $\norm{\chi_\omega^w(\theta, x, D)}_{\cL(\cH^{s+2}, \cH^s)}$, from which  estimate \eqref{pro1} follows.}

Finally using the metaplectic representation (see \cite{CombRob}) and \eqref{fin}, there exists a unitary transformation in $L^2$,  $\cR(\cP^{-1})$, such that
$$\cR(\cP^{-1})^* H_\infty \cR(\cP^{-1})= \sum_{j=1}^d \nu_j^\infty (x_j^2 - \partial_{x_j}^2) \ .$$
\end{proof}
We prove now Corollary \ref{cor.1}.

\begin{proof}[Proof of Corollary \ref{cor.1}]
Consider first the propagator $e^{- \im t H_\infty}$. We claim that 
\begin{equation}
\label{H.inf.born}
\sup_{t \in \R} \norm{ e^{- \im t H_\infty} }_{\cL(\cH^s)} < \infty \ , \qquad \forall t \in \R \ .
\end{equation}
Recall that $H_\infty = h_\infty^w(x, D)$ where $h_\infty(x,\xi)$ is a positive definite symmetric form which can be diagonalized by a symplectic matrix $\cP$.
Since $h_\infty$ is positive definite, there exist $c_0, c_1, c_2 >0$ s.t.
$$
c_1 h_0(x,\xi) \leq c_0 + h_\infty(x, \xi) \leq c_2 (1+ h_0(x, \xi)) \ ,
$$
which implies that 
$C_1 H_0 \leq C_0 + H_\infty \leq  C_2 (1+ H_0)$ as bilinear form. Thus one has the equivalence of norms 
$$
C_s^{-1}\norm{\psi}_{\cH^s} \leq \norm{(H_\infty)^{s/2} \psi}_{L^2} \leq C_s \norm{\psi}_{\cH^s} \ .
$$
Then
$$
\norm{e^{-\im t H_\infty}\psi_0}_{\cH^s} \leq C_s\norm{ (H_\infty)^{s/2}\, e^{-\im t H_\infty} \psi_0}_{L^2} = C_s
\norm{(H_\infty)^{s/2}\,\psi_0}_{L^2} \leq C'_s\norm{\psi_0}_{\cH^s}
$$
which implies \eqref{H.inf.born}.\\
Now let $\psi(t)$ be a solution of \eqref{schro}. By  formula \eqref{decom},  $\psi(t) = U^*_\om(\omega t) e^{- \im t H_\infty}U_\omega(0) \psi_0$. Then   the upper bound in \eqref{unest} follows easily from \eqref{H.inf.born} and $\sup_{t}\norm{ U_\om(\omega t)}_{\cL(\cH^s)}< \infty$, which is a consequence of Lemma \ref{lem:flow}. The lower-bound follows by applying Lemma  \ref{lem:flow} $(iv)$.

Finally estimate \eqref{unest2} follows from \eqref{pro1}.

 \end{proof}

\section{A classical  KAM result.}\label{2}

In this section we prove  Theorem \ref{KAMclassico}. 
We prefer to work in the extended phase space in which we add the angles
$\theta\in\T^n$ as new variables and their conjugated momenta
$I\in\R^n$. Furthermore we will use complex variables defined by
$$
z_j=\frac{\xi_j-\im x_j}{\sqrt 2}\ ,
$$
so that our phase space will be $\T^n\times \R^n\times \C^d$, with
$\C^d$ considered as a real vector space. The symplectic form is $dI\wedge d\theta+\im dz\wedge d\bar z
$ and the Hamilton equations of a Hamiltonian function
$h(\theta,I,z,\bar z)$ are
$$
\dot I=-\frac{\partial h}{\partial \theta}\ ,\quad \dot
\theta=\frac{\partial h}{\partial I}\ ,\quad\dot
z=-\im\frac{\partial h}{\partial \bar z}\ .
$$
In this framework $h_0$ takes the form
$h_0=\sum_{j=1}^{d}\nu_jz_j\bar z_j$ and $W$ takes the form of
polynomial in $z,\bar z$ of degree two
$W(\theta,x,\xi)=q(\theta,z,\bar z)$. The Hamiltonian system associated with the time dependent Hamiltonian
$h_\epsilon$ (see \eqref{ham.cla}) is then equivalent to the Hamiltonian system associated with the time
independent Hamiltonian $\omega \cdot I + h_\epsilon$ (written in
complex variables) in the extended phase space.

\subsection{General strategy}
Let $h$ be a   Hamiltonian in normal form:
\begin{equation}\label{h}
h(I,\theta,z,\bar z)= \om\cdot I +\langle z, N(\om)\bar z\rangle\end{equation}
with $N\in\M_H$ the set of Hermitian matrix. Notice that at the beginning of the procedure $N$ is diagonal, 
$$N=N_0=\diag(\nu_j,\ j=1,\cdots,d)$$
 and is independent of $\om$. Let
 $q\equiv q_\om$ be a polynomial Hamiltonian which takes real values: $q(\theta,z,\bar z)\in\R$ for $\theta\in\T^n$ and $z\in\C^d$. We write
\begin{equation}
\label{q}
q(\theta,z, \bar z)=\lan z,Q_{zz}(\theta) z\ran + \lan z,Q_{z\bar z}(\theta)\bar z\ran +\lan \bar z,\bar Q_{zz}(\theta)\bar z\ran + \lan Q_z(\theta),z\ran+\lan  \bar Q_{\bar z}(\theta), \bar z\ran
\end{equation}
where $Q_{zz}(\theta)\equiv Q_{zz}(\om,\theta)$ and $Q_{z\bar z}(\theta)\equiv Q_{z\bar z}(\om,\theta)$ are $d\times d$ complex matrices and $Q_{z}(\theta)\equiv Q_{z}(\theta,\om)$ is a vector in $\C^d$. They all depend analytically on the angle $\theta\in\T^n_\s:=\{x+iy\mid x\in\T^n,\ y \in\R^n,\ |y|<\s\}$. We notice that $Q_{z\bar z}$ is Hermitian while $Q_{zz}$ is symmetric. The size of such polynomial function depending analytically on $\theta\in\T^n_\s$ and $C^1$ on
$\om\in\D=(0,2\pi)^n$ will be controlled by the norm
 $$
 [q]_{\s}:=\sup_{\substack{|\Im \theta|<\s \\ \om\in\D,\ j=0,1}}  \|\p_\om^jQ_{zz}(\om,\theta)\|+\sup_{\substack{|\Im \theta|<\s \\ \om\in\D,\ j=0,1}} \|\p_\om^jQ_{z\bar z}(\om,\theta)\| +  \sup_{\substack{|\Im \theta|<\s \\ \om\in\D,\ j=0,1}} |\p_\om^jQ_z(\om,\theta)|
 $$
and we denote by $\mathcal Q(\s)$ the  class of Hamiltonians of the form \eqref{q} whose norm $[\cdot ]_{\sigma}$ is finite.\\
Let us assume that $[q]_{\s}=\cO (\eps)$. We search for $\chi\equiv \chi_\om\in \mathcal Q(\s)$
with $[\chi]_{\sigma}=\cO (\eps)$ such that its  time-one flow 
$\phi_\chi\equiv \phi_\chi^{t=1}$ (in the extended phase space, of
course) transforms the Hamiltonian $h+ q$ into
\begin{equation} \label{eq}
(h+ q(\theta))\circ \phi_\chi=h_++ q_+(\theta),\quad \om\in\D_+
\end{equation}
where $h_+=\om\cdot I +\langle z, N_+\bar z \rangle$ is a new normal form, $\eps$-close to $h$,  the new perturbation $q_+\in\mathcal Q(\s_)$  is of size\footnote{Formally we could expect  $q_+$ to be of size $O(\eps^2)$ but the small divisors and the reduction of the analyticity domain will lead to an estimate of the type $\cO (\eps^{\frac 32})$. } $\cO (\eps^{\frac 32})$ and $\D_+\subset \D$ is $\eps^\a$-close to $\D$ for some $\a>0$. Notice that all the functions are defined on the whole open set $\D$ but the equalities \eqref{eq} holds only on $\D_+$ a subset of $\D$ from which we excised the "resonant parts".\\
As a consequence of the Hamiltonian structure we have  that
$$
(h+ q(\theta))\circ \phi_\chi= h+\{ h,\chi \}+q(\theta)+ \cO (\eps^{\frac 32}),\quad \om\in\D_+ \ .
$$
So to achieve the goal above 
we should  solve the {\it homological equation}:
\begin{equation} \label{eq-hom}
\{ h,\chi \}= h_+-h -q(\theta)+\cO (\eps^{\frac 32}),\quad \om\in\D_+.
\end{equation}
Repeating iteratively 
the same procedure with $h_+$ instead of $h$, we will construct a change of variable $\phi$ such that
$$
(h+ q(\theta))\circ \phi=\om\cdot I +h_\infty, \quad \om\in\D_\infty\,,
$$
with $h_\infty=\langle z, N_\infty(\om)\bar z\rangle$ in normal form and $\D_\infty$ a $\eps^\a$-close subset of $\D$.
 Note that  we will be forced to solve the homological equation not
only for the diagonal normal form $N_0$, but for 
more general normal form Hamiltonians
\eqref{h} with  $N$ close to $N_0$ .

\subsection{Homological equation}\label{section-homo}

\begin{proposition}\label{prop:homo1}
Let $\D=(0,2\pi)^n$ and  $\D\ni\om\mapsto N(\om)\in \M_H$ be a $C^1$ mapping that  verifies 
\begin{equation}\label{ass}
 \left\| \p_\om^j (N(\om)-N_0) \right\| < \frac {\min(1,\nu_0)}{\max(4,d)}\end{equation}
for $j=0,1$ and $\om\in \D$.
Let $h=\om\cdot I+\lan z,N\bar z\ran$, $q\in\mathcal Q(\s)$ , $\ka>0$ and $K\ge 1$. \\
 Then there exists a closed subset $\D'=\D'( \ka,K)\subset \D$, satisfying 
 \begin{equation}\label{estim:D}\meas (\D\setminus \D')\leq  
 CK^{n}\ka, \end{equation}
and there exist $\chi, r \in\cap_{0\leq\s'<\s}\mathcal Q(\s')$ and $\D\ni\om\mapsto \tilde N(\om)\in \M_H$ a $C^1$ mapping such that for all $\om\in\D'$
\begin{equation}\label{ho}
\{h,\chi \}+q=\lan z,\tilde N\bar z\ran +r\ .
\end{equation} 
Furthermore for all $\om\in\D$
\begin{equation}\label{B}
 \left\|\p_\om^j \tilde N(\om)\right\|\leq   [q]_{\s}\ , \qquad j=0,1\end{equation}
 and for all $0\leq \s'<\s$ 
 \begin{align}
\label{estim-homoR}
[r]_{\s'}&\leq  C\ \frac{e^{-\frac12 (\s-\s')K}}{ (\s-\s')^{n}}
[q]_{\s}\,,\\
\label{estim-homoS}
[\chi]_{\s'}&\leq \frac{CK}{\ka^2 (\s-\s')^{n}}
[q]_{\s}\,.
 \end{align} 

\end{proposition}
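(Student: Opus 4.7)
The plan is to solve the homological equation mode by mode in a Fourier series in $\theta$ and truncate at frequencies $|k|\le K$.

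First I would Fourier expand $q(\theta,z,\bar z) = \sum_{k\in \Z^n}\hat q_k(z,\bar z)\,e^{\im k\cdot\theta}$ and look for $\chi$ of the same polynomial form. Projecting \eqref{ho} onto each Fourier mode yields
\[
\cL_k \hat\chi_k + \hat q_k = \delta_{k,0}\langle z,\tilde N\bar z\rangle + \hat r_k,\qquad
\cL_k := \im(\omega\cdot k)\,\mathrm{Id} + \{h_N,\cdot\},
\]
where $\{h_N,\cdot\}$ is viewed as a linear operator on the finite-dimensional space of polynomials in $(z,\bar z)$ of degree $\le 2$ without constant term. This space splits into the five invariant blocks of type $zz$, $z\bar z$, $\bar z\bar z$, $z$ and $\bar z$. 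Diagonalising $N(\omega)$ (which is Hermitian and, by \eqref{ass}, close to $N_0$) with eigenvalues $\mu_1(\omega),\dots,\mu_d(\omega)$, the spectrum of $\cL_k$ on these five blocks consists respectively of the numbers $\im(\omega\cdot k-\mu_i-\mu_j)$, $\im(\omega\cdot k+\mu_j-\mu_i)$, $\im(\omega\cdot k+\mu_i+\mu_j)$, $\im(\omega\cdot k-\mu_i)$ and $\im(\omega\cdot k+\mu_i)$, for $i,j=1,\dots,d$.

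Second, I would absorb the single non-invertible piece into $\tilde N$ and define the non-resonant set by the natural second Melnikov conditions. The $k=0$ $(1,1)$-block of $\cL_0$ has entire spectrum vanishing, whereas all the other blocks at $k=0$ are already bounded below by a multiple of $\nu_0$ thanks to \eqref{ass}. I therefore set
\[
\tilde N(\omega):=\hat Q_{z\bar z}(0,\omega),
\]
which is Hermitian, defined and $C^1$ on all of $\D$, and trivially satisfies \eqref{B}. For $0<|k|\le K$ I impose
\[
|\omega\cdot k\pm\mu_i(\omega)\pm\mu_j(\omega)|\ge \kappa,\qquad |\omega\cdot k\pm\mu_i(\omega)|\ge \kappa\qquad\forall\,i,j,
\]
and let $\D'$ be the subset of $\D$ where all these inequalities hold. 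By \eqref{ass} the $\omega$-gradient of each divisor equals $k$ up to an $O(\delta)$ perturbation with $\delta<1/\max(4,d)$, so for $|k|\ge 1$ the complement of the good set for each individual condition has measure $\le C\kappa$; summing over the $O(d^2 K^n)$ conditions yields \eqref{estim:D}.

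Third, I would construct $\chi$ and $r$ explicitly and collect the estimates. For $|k|>K$ I put everything into the remainder: $\hat\chi_k:=0$, $\hat r_k:=-\hat q_k$; by Cauchy estimates $\|\hat q_k\|\le [q]_\sigma e^{-|k|\sigma}$, and the tail $\sum_{|k|>K}e^{-|k|(\sigma-\sigma')}$ is bounded by $C(\sigma-\sigma')^{-n}e^{-K(\sigma-\sigma')/2}$, which gives \eqref{estim-homoR}. For $0<|k|\le K$, and for the non-$(1,1)$ blocks at $k=0$, on $\D'$ the operator $\cL_k$ is invertible with $\|\cL_k^{-1}\|\le \kappa^{-1}$; I then set $\hat\chi_k:=-\eta_k(\omega)\,\cL_k^{-1}\hat q_k$, with $\eta_k$ a smooth cut-off equal to $1$ on $\D'$ and supported where all divisors exceed $\kappa/2$, so that $\chi$ is globally $C^1$ on $\D$ while \eqref{ho} is exactly satisfied on $\D'$. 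The resolvent identity $\partial_\omega \cL_k^{-1}=-\cL_k^{-1}(\partial_\omega\cL_k)\cL_k^{-1}$ combined with $\|\partial_\omega\cL_k\|\le |k|+C\delta$ yields $\|\hat\chi_k\|_{C^1_\omega}\le C\kappa^{-2}|k|[q]_\sigma e^{-|k|\sigma}$; summing over $|k|\le K$ and using $|k|\le K$ together with $\sum_k e^{-|k|(\sigma-\sigma')}\le C(\sigma-\sigma')^{-n}$ gives \eqref{estim-homoS}.

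The main delicate point is the $C^1$-in-$\omega$ bookkeeping, because the spectral decomposition of $\{h_N,\cdot\}$ via diagonalisation of $N(\omega)$ is only $C^0$ at eigenvalue crossings. For this reason it is cleaner to invert $\cL_k$ directly as a matrix of size $O(d^2)$ and to estimate its inverse in operator norm from the Melnikov-controlled eigenvalues through a standard resolvent/Cramer argument; this keeps the $\omega$-dependence manifestly $C^1$ and lets the measure-excision run without a change of basis.
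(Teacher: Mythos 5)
Your proposal is correct and follows essentially the same approach as the paper's proof: Fourier decomposition with truncation at $|k|\leq K$, definition of $\tilde N$ from the zero mode, measure excision via lower bounds on the $\omega$-derivatives of the eigenvalues of the self-adjoint homological operator, smooth cut-offs for the $C^1$ extension off $\D'$, and resolvent-type differentiation yielding the $\kappa^{-2}$ factor in \eqref{estim-homoS}. The one detail you flag but leave imprecise --- the merely $C^0$ regularity of the eigenvector decomposition at crossings, which threatens the measure estimate --- is handled in the paper by invoking Kato's theorem on holomorphic one-parameter families of Hermitian matrices together with a Feynman--Hellmann-type formula, and then a density argument to pass from analytic to $C^1$ dependence of $N$ on $\omega$.
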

\proof
Writing the Hamiltonians $h$, $q$ and $\chi$ as in \eqref{q}, the homological equation \eqref{ho} is equivalent to the three following equations (we use that $N$ is Hermitian, thus $\bar N={}^tN$):
\begin{equation}\label{homo1}
\om\cdot\nabla_\theta X_{z\bar z} - \im [N,X_{z\bar z}]= \tilde N-Q_{z\bar z}+ R_{z\bar z},
\end{equation}
\begin{equation}\label{homo2}
\om\cdot\nabla_\theta X_{z z} - \im (NX_{z z}+X_{z z}\bar N)= -Q_{z z}+ R_{z z},
\end{equation}
\begin{equation}\label{homo3}
\om\cdot\nabla_\theta X_z + \im NX_z= -Q_z+R_z\, .
\end{equation}
First we solve \eqref{homo1}. To simplify notations we drop the indices $z\bar z$.
Written in Fourier variables (w.r.t. $\theta$), \eqref{homo1} reads
\begin{equation}\label{homo+} 
 \im \om\cdot k\  \hat X_k-\im [N,\hat X_k]=\delta_{k,0}\tilde N- \hat Q_k+\hat R_k,\quad k\in\Z^n
 \end{equation}
where  $\delta_{k,j}$ denotes the Kronecker symbol. \\
When $k=0$ we solve this equation   by defining
 $$
 \hat X_0=0,\quad \hat R_0=0 \ \  \text{ and } \ \ \tilde N=\hat Q_0.
$$
We notice that $\tilde N\in\M_H$ and satisfies \eqref{B}.\\
When $|k|\geq K$ equation \eqref{homo+} is solved by defining
\begin{align}
\label{klarge}
\hat R_k=\hat Q_k, \quad \hat X_k=0 \text{ for }|k|\geq K.
\end{align}
Then we set
$$\hat R_k=0\quad \text{ for }|k|\leq K
$$
in such a way that $r \in\cap_{0\leq\s'<\s}\mathcal Q(\s')$ and by a standard argument $r$ satisfies \eqref{estim-homoR}.  Now it remains to solve the equations for $\hat X_k$, $0<|k|\leq K$ which we rewrite as
\begin{equation}\label{L_k}L_k(\om) \hat X_k= i\hat Q_k
\end{equation}
where $L_k(\om)$ is the linear operator from $\M_S$, the space of symmetric matrices, into itself defined by
$$L_k(\om):  M \mapsto k\cdot \om-[N(\om),M] \ . $$
We notice that $\M_S$ can be endowed with the Hermitian product: $(A,B)=Tr(\bar A B)$ associated with the Hilbert Schmidt norm. Since $N$ is Hermitian, $L_k(\om)$ is self adjoint for this structure. As a first consequence we get
\begin{equation}\label{L-1}\|(L_k(\om))^{-1}\|\leq \frac1{\min\{| \lambda|,\, \lambda \in\varSigma(L_k(\om))\}}=\frac1{\min\{|k\cdot \om-\a(\om)+\beta(\om)| \mid \a, \beta \in\varSigma(N(\om))\}}\end{equation}
where for any matrix $A$, we denote its spectrum by $\varSigma (A)$.\\
Let us recall an important result of perturbation theory which is a consequence of  Theorem 1.10 in \cite{Kato} (since hermitian matrices are normal matrices):
\begin{theorem}[\cite{Kato} Theorem 1.10]
Let $I\subset \R$ and $I\ni z\mapsto M(z)$ a holomorphic curve of hermitian matrices.  Then all the eigenvalues  and associated eigenvectors of $M(z)$ can be parametrized  holomorphically on $I$.
\end{theorem}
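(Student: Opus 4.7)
The plan is to follow Rellich's classical approach, establishing the result in two stages: first parametrize the eigenvalues holomorphically along $I$, then construct holomorphic eigenvectors via Riesz spectral projections, using induction on dimension to handle eigenvalue crossings.

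For the eigenvalues, I would start from the characteristic polynomial $p(z,\lambda)=\det(\lambda\uno-M(z))$, whose coefficients are holomorphic in $z$ in a complex neighborhood of $I$. By the Puiseux theorem, near any $z_0\in I$ each root admits a convergent expansion $\lambda(z)=\sum_{n\geq n_0} c_n (z-z_0)^{n/k}$ for some integer $k\geq 1$. The crucial use of hermiticity enters here: for real $z$ the matrix $M(z)$ is Hermitian, so its eigenvalues are real on $I$. A branch with $k>1$ would, however, produce non-real values on at least one side of $z_0$ (the $k$ determinations of $(z-z_0)^{1/k}$ form a single Galois orbit that is not preserved by complex conjugation unless $k=1$), contradicting reality. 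Hence $k=1$, every local root is holomorphic, and analytic continuation along the interval $I$ yields global holomorphic parametrizations $\lambda_1(z),\dots,\lambda_d(z)$.

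For the eigenvectors, wherever $\lambda_j(z)$ remains separated from the other eigenvalues, let $\Gamma_j(z)$ be a small positively oriented circle enclosing only $\lambda_j(z)$. The Riesz spectral projection
\[
P_j(z)=-\frac{1}{2\pi\im}\oint_{\Gamma_j(z)}(\zeta\uno-M(z))^{-1}\,d\zeta
\]
is then holomorphic and of rank equal to the (locally constant) multiplicity of $\lambda_j(z)$. A holomorphic eigenvector is obtained by setting $v_j(z)=P_j(z)v_0$ for a fixed $v_0$ with $P_j(z_0)v_0\neq 0$, followed by normalization; when the multiplicity exceeds one, apply Gram--Schmidt orthogonalization to a holomorphic frame of $\mathrm{Range}\,P_j(z)$, which preserves holomorphy because the Gram matrix is invertible and holomorphic.

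The main obstacle is crossing points $z_*\in I$ where several eigenvalues coincide. There the individual projections develop poles, but the \emph{total} projection $P(z)$ onto the whole cluster, defined by a single contour enclosing all coalescing eigenvalues, remains holomorphic across $z_*$ by the same Cauchy integral argument. Restricting $M(z)$ to the holomorphic subbundle $\mathrm{Range}\,P(z)$, trivialized by a local holomorphic frame extracted from $P(z)$, produces a strictly smaller holomorphic family of Hermitian matrices, and one concludes by induction on the dimension $d$. Patching local holomorphic frames into a global one on $I$ is straightforward since crossings are isolated (zeros of the discriminant of $p$) and the globally defined eigenvalues $\lambda_j$ are already holomorphic through them, so the only residual task is to match orthonormal bases of the eigenspaces across each crossing, which is possible because hermiticity rules out non-trivial monodromy just as it did for the eigenvalues themselves.
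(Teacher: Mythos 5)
This statement is quoted in the paper as a citation to Kato's book (Theorem~II.1.10 there); the paper offers no proof, so there is nothing internal to compare against. Your proposal reproduces Rellich's classical argument, which is indeed the one Kato's exposition is based on, so the \emph{route} is appropriate. Two steps, however, do not hold up as written.

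First, the parenthetical justification that a Puiseux cycle of length $k>1$ is impossible because ``the $k$ determinations of $(z-z_0)^{1/k}$ form a single Galois orbit that is not preserved by complex conjugation'' is false as stated: conjugation sends $\omega^j t\mapsto\omega^{-j}t$ and therefore \emph{does} preserve the orbit as a set. What reality actually forces is that conjugation fix each branch pointwise, and even that only rules out $k>2$ directly. The correct argument has to combine two observations: for the principal branch $\lambda_0(z)=\sum_n c_n(z-z_0)^{n/k}$, reality on the side $z>z_0$ forces all $c_n\in\R$, and reality of the rotated branches $\lambda_j(z)=\sum_n c_n\omega^{jn}(z-z_0)^{n/k}$ then forces $c_n=0$ unless $\omega^{jn}$ is real, which for $j=1$ gives $k\mid 2n$; this reduces one to $k\in\{1,2\}$, and the residual case $k=2$ is eliminated by looking at $z<z_0$, where $(z-z_0)^{1/2}$ is purely imaginary. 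Your compressed phrasing skips the $k=2$ case entirely and misstates the Galois fact.

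Second, the induction on dimension via the total Riesz projection $P(z)$ is not closed: if \emph{all} $d$ eigenvalues coalesce at $z_*$ (e.g.\ $M(z_*)$ a scalar matrix but $M$ non-constant), then $P(z)\equiv\uno$ near $z_*$, the ``restricted'' family is the original one, and the dimension does not drop. One needs an extra reduction here, for instance replacing $M(z)$ by $\bigl(M(z)-\lambda_1(z)\uno\bigr)/(z-z_*)^m$, where $\lambda_1$ is one of the (already constructed) holomorphic eigenvalue branches and $m$ is the order of vanishing; this is again a holomorphic Hermitian family with the same eigenvectors, and after finitely many such steps one reaches a point where the matrix at $z_*$ has at least two distinct eigenvalues, so the inductive step applies. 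Finally, the concluding sentence asserting that ``hermiticity rules out non-trivial monodromy just as it did for the eigenvalues'' is exactly the content that needs proof; the reason the eigenprojections extend across crossings is that for real $z$ they are \emph{orthogonal} projections, hence of norm~$1$, so their Laurent--Puiseux expansions can have neither a polar part nor a branch point. Without that remark the patching of local frames is incomplete.
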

Let us assume for a while that $N$ depends analytically of $\om$ in
such a way that $\om\mapsto L_k(\om)$ is analytic.  Fix a direction
$z_k\in\R^n$, the eigenvalue $\lambda_k (\om)=k\cdot\om
-\a(\om)+\beta(\om)$ of $L_k(\om)$ is $C^1$ in the
direction\footnote{i.e. $t\mapsto \lambda_k (\om+t z_k))$ is a
  holomorphic curve on a neighborhood of $0$, we denote $\partial_\om
  \lambda (\om)\cdot z_k$ its derivative at $t=0$.} $z_k$ and the
associated unitary eigenvector, denoted by $v(\om)$, is also piece wise $C^1$ in the direction $z_k$.  Then, as a consequence of the
hermiticity of $L_k(\om)$ we have
$$\partial_\om \lambda (\om)\cdot z_k= \langle v(\om),(\partial_\om L_k(\om)\cdot z_k)\ v(\om)\rangle.$$
Therefore, if $N$ depends analytically of $\om$, we deduce using \eqref{ass} and choosing $z_k=\frac k{|k|}$
\begin{equation}\label{dom}\left|  \partial_\om \lambda_k (\om)\cdot
  \frac k{|k|}\right|\geq |k|-2\| \partial_\om N\|\geq \frac 12\,
  \quad \text{for } k\neq 0,\end{equation}
which extends also to the points of discontinuity of $v(\omega)$.
Now given a matrix $L$ depending on the parameter $\om\in\D$, we define
$$\D(L,\ka)=\{\om\in\D\mid \|L(\om)^{-1}\|\leq \ka^{-1}\}$$
and we recall the following classical lemma:
\begin{lemma}
\label{lem-mes}
Let $f:[0,1]\mapsto\R$ a $C^1$-map satisfying $|f'(x)|\geq \delta$ for all $x\in[0,1]$ and let $\ka>0$.  Then
$$\meas\{x\in[0,1]\mid |f(x)|\leq \ka\}\leq \frac{\ka}{\delta}.$$
\end{lemma}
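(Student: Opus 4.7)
The plan is to reduce the measure bound to a length-of-interval computation, using that the hypothesis $|f'| \geq \delta > 0$ forces $f$ to be strictly monotone. First I would observe that since $f'$ is continuous on $[0,1]$ and never vanishes, the intermediate value theorem implies that $f'$ has constant sign on $[0,1]$; hence $f$ is strictly monotone on $[0,1]$, and in particular injective.

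By continuity and monotonicity, the sublevel set
\[
E_\ka := \{x \in [0,1] \mid |f(x)| \leq \ka\} = f^{-1}\bigl([-\ka,\ka]\bigr) \cap [0,1]
\]
is a closed sub-interval of $[0,1]$ (possibly empty, in which case the claim is trivial). Writing it as $[a,b]$ with $0 \leq a \leq b \leq 1$, the mean value theorem produces $\xi \in (a,b)$ with
\[
|f(b) - f(a)| = |f'(\xi)|\,(b-a) \geq \delta\,(b-a),
\]
so that $\meas(E_\ka) = b-a \leq |f(b)-f(a)|/\delta$. Since both endpoints lie in $E_\ka$, one has $|f(a)|, |f(b)| \leq \ka$, and therefore $|f(b)-f(a)| \leq 2\ka$, giving the bound $\meas(E_\ka) \leq 2\ka/\delta$.

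There is essentially no obstacle here: the lemma is a one-dimensional calculus exercise, with the only minor friction being the verification that $f'$ has constant sign (handled by IVT) in order to promote $|f'| \geq \delta$ into strict monotonicity of $f$ itself. The harmless universal factor of $2$ separating the bound my argument produces from the stated $\ka/\delta$ is absorbed into the constant $C$ appearing in \eqref{estim:D} when the lemma is applied.
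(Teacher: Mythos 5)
Your proof is correct, and the paper in fact states the lemma without proof (citing it as ``classical''), so there is no argument in the paper to compare against. Your route is the standard one: continuity of $f'$ plus $|f'|\geq\delta>0$ forces a constant sign via the intermediate value theorem, hence $f$ is strictly monotone, the sublevel set $\{|f|\leq\ka\}$ is a (possibly empty) closed interval $[a,b]$, and the mean value theorem then gives $b-a\leq |f(b)-f(a)|/\delta\leq 2\ka/\delta$. You are also right to flag the factor of $2$: the bound $\ka/\delta$ as printed in the paper is in fact \emph{not} achievable in general, as the example $f(x)=\delta x-\ka$ shows (for which $\{|f|\leq\ka\}=[0,2\ka/\delta]$ when $2\ka/\delta\leq 1$), so $2\ka/\delta$ is the sharp constant. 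This is harmless where the lemma is used, since it enters estimate \eqref{meask} only through an unspecified constant $C$; still, it is a genuine (if minor) inaccuracy in the statement of Lemma \ref{lem-mes}, and you were right not to contort the argument to hit the printed constant.

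Two very small points worth tightening in a final write-up. First, the claim that $E_\ka$ is a closed interval deserves one more clause: $f^{-1}([-\ka,\ka])$ is an interval because $f$ is continuous and strictly monotone on the interval $[0,1]$, and it is closed because $[-\ka,\ka]$ is closed and $f$ is continuous; intersecting with $[0,1]$ preserves both. Second, the mean value theorem as you invoke it requires $a<b$; the degenerate cases $a=b$ and $E_\ka=\emptyset$ give measure $0$ and are trivially within the bound, which you implicitly cover with ``possibly empty'' but could state once explicitly.
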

Combining this Lemma \eqref{L-1} and \eqref{dom} we deduce that, if $N$
depends analytically of $\om$, then for $k\neq 0$
\begin{equation}\label{meask}\meas (\D\setminus \D(L_k,\ka))\leq C\ka\end{equation}
Now it turns out that, by a density argument,  this last estimate remains valid (with a larger constant $C$) when $N$ is only a $C^1$ function of $\om$ : the point is that \eqref{dom} holds true uniformly for close analytic approximations of $N$.
\\ 
 In particular defining $$\D'=\bigcap_{0<|k|\leq K}\D(L_k,\ka)$$
 $\D'$ is closed and satisfies \eqref{estim:D}.\\
By construction, $\hat X_k(\om):=iL_k(\om)^{-1}\hat Q_k$ satisfies \eqref{L_k} for $0<|k|\leq K$ and $\om\in\D(L_k,\ka)$ and
  \begin{equation}\label{Xk}\|\hat X_k(\om)\|\leq \ka^{-1}\|\hat Q_k(\om)\|,\quad \om\in\D(L_k,\ka).\end{equation}
  
 It remains  to extend $\hat X_k(\cdot)$ on $\D$.
Using again \eqref{ass} we have for any $|k|\leq K$ and any unit vector $z$, $|\partial_\om \lambda (\om)\cdot z|\leq CK$. Therefore 
$$\dist ( \D\setminus \D(L_k,\ka), \D(L_k,\ka/2))\geq \frac{\ka}{CK}$$ 
and we can construct (by a convolution argument) for each $k$, $0<|k|\leq K$, a $C^1$ function $g_k$ on $\D$ with 
\begin{equation}\label{g}|g_k|_{C^0(\D)}\leq C,\quad|g_k|_{C^1(\D)}\leq CK\ka^{-1}\end{equation} (the constant C is independent of k) and such that
$g_k(\om)=1$ for $\om\notin \D(L_k,\ka)$ and $g_k(\om)=0$ for $\om\in \D(L_k,\ka/2)$. Then $
\tilde X_k=g_k\hat X_k$ is a $C^1$ extension of $\hat X_k$ to $\D$. Similarly we define $\tilde Q_k,=g_k\hat Q_k$ in such a way that $\tilde X_k$ satisfies 
$$L_k(\om)\tilde X_k(\om)=i\tilde Q_k(\om),\quad 0<|k|\leq K,\ \om\in\D.$$
Differentiating with respect to $\om$
 leads to 
\begin{equation*}
 L_k(\om)  \partial_{\om_j}\hat X(k)=\im \partial_{\om_j} \hat Q(k)- k_j\hat X(k)+ [\partial_{\om_j}N,\hat X(k)],\quad 1\leq j\leq n\ . \end{equation*}
Denoting $B_k(\om)=\im \partial_{\om_j}\tilde Q_k(\om)- k_j\tilde X_k(\om)+ [\partial_{\om_j}N(\om),\tilde X_k(\om)]$ we have
$$\left\| \partial_{\om_j}\tilde X_k(\om)\right\|\leq 
\ka^{-1}
 \|B_k(\om)\| ,\quad \om\in\D \ .
$$
Using \eqref{ass}, \eqref{Xk} and \eqref{g} we get for $|k|\leq K$ and $\om\in\D$
\begin{align*}\|B_k(\om)\| &\leq \| \partial_{\om_j}\tilde Q_k(\om)\|+K\|\tilde  X_k(\om)\|+2 \|\partial_{\om_j}N(\om)\| \| \tilde X_k(\om)\|\\
&\leq CK\ka^{-1} (\| \partial_{\om_j}\hat Q(k,\om)\|+\| \hat Q(k,\om)\|).\end{align*}
Combining the last two estimates we get 
$$\sup_{\om\in\D,\ j=0,1}\left\| \partial^j_{\om}\tilde X_k(\om)\right\|\leq CK\ka^{-2}\sup_{\om\in\D,\ j=0,1}\left\| \partial^j_{\om}\hat Q_k(\om)\right\|.$$
Thus defining $$X_{z\bar z}(\om,\theta)=\sum_{0<|k|\leq K}\tilde X_k(\om)e^{ik\cdot \theta}$$
$X_{z\bar z}(\om,\cdot)$ satisfies \eqref{homo1} for $\om\in\D'$ and leads to
\eqref{estim-homoS} for $\chi_{z\bar z}(\om,\theta,z,\bar z)=\langle z,X_{z\bar z}(\om,\cdot) \bar z\rangle$ . 

\smallskip

We solve \eqref{homo3} in a similar way. We notice that in this case we face the small divisors  $|\om\cdot k-\alpha(\om)|,\  k\in\Z^n$ where $\a\in\varSigma (N(\om))$. In particular for $k=0$  these quantities are $\geq \frac{\nu_0}2$ since $|\a-\nu_j|\leq \frac {\nu_0} 4$ for some $1\leq j\leq d$ by \eqref{ass}.

Written in Fourier and dropping indices $zz$ \eqref{homo2} reads
\begin{equation}\label{hom2k}
 \im \om\cdot k\  \hat X(k)-\im(N\hat X(k)+\hat X(k)\bar N)=- \hat Q(k)+\hat R(k)\, .
\end{equation}
So to mimic the resolution of \eqref{homo+} we have to replace  the operator $L_k(\om)$ by the operator $M_k(\om)$ defined on $\M_S$ by
$$M_k(\om)X:=\om\cdot k+NX+X\bar N.$$
This operator is still self adjoint for the Hermitian product $(A,B)=Tr(\bar A B)$ so the same strategy apply. Nevertheless we have to consider differently the case $k=0$. In that case we use that the eigenvalues of $M_0(\om)$ are  close to eigenvalues of the operator $M_0$ defined by
$$M_0:  X \mapsto N_0X+X\bar N_0=N_0X+XN_0$$
with $N_0=\diag(\nu_j,\ j=1,\cdots,d)$ a real and diagonal matrix. Actually in view of \eqref{ass}
$$\|(L-L_0)M\|_{HS}\leq \| N-N_0\|_{HS}\, \|M\|_{HS}\leq d \| N-N_0\|\, \|M\|_{HS}\leq  {\nu_0}.$$
 The eigenvalues of $L_0$ are $\{\nu_j+\nu_\ell\mid j,\ell=1,\cdots,d\}$ and they are all larger than $2\nu_0$.  We conclude that all the eigenvalues of $M_0(\om)$ satisfy $|\a (\om)|\geq {\nu_0}$. The end of the proof follow as before.

\endproof

\subsection{The KAM step.}
 Theorem \ref{KAMclassico} is proved by an iterative KAM procedure. We begin with the initial Hamiltonian $h_0+q_0$ where
\begin{equation}\label{h0}
h_0(I,\theta,z,\bar z)= \om\cdot I +\langle z, N_0\bar z\rangle\,,\end{equation}
 $N_0=\diag (\nu_j,\ j=1,\cdots,d)$, $\om\in\D \equiv [1,2]^n$ and the quadratic perturbation $q_0=\eps W\in \mathcal Q(\s,\D)$
 for some $\s>0$.  Then we construct iteratively the change of variables $\phi_m$, the normal form $h_m=\om\cdot I +\langle z, N_m\bar z\rangle$ and the perturbation $q_m\in \mathcal Q(\s_m,\D_m)$  as follows: assume that the construction is done up to step $m\geq0$ then
\begin{itemize}
\item[(i)]
Using Proposition \ref{prop:homo1}  we construct $\chi_{m+1}$, $r_{m+1}$ and $\tilde N_{m}$ the solution of the homological equation:
\begin{equation} \label{eq-homo}
\{ h,\chi_{m+1} \}=\langle z,  \tilde N_{m}\bar z\rangle -q_m(\theta)+r_{m+1},\quad \om\in\D_{m+1},\ \theta\in\T^n_{\s_{m+1}} .
\end{equation}
\item[(ii)] We define $h_{m+1}:=\om\cdot I+\langle z,  N_{m+1}\bar z\rangle$ by
\begin{equation}\label{Nm} N_{m+1}=N_m+\tilde N_m\,,\end{equation}
and
\begin{equation}\label{qm}  q_{m+1} :=r_{m}  +\int_0^1\{(1-t)(h_{m+1}-h_m+r_{m+1})+tq_{m}, \chi_{m+1}\}\circ \phi_{\chi_{m+1}}^t\dd t\,.
\end{equation}
\end{itemize}
By construction, if $Q_{m}$ and $N_{m}$ are Hermitian, so are $R_{m}$ and  $S_{m+1}$ by the resolution of the homological equation, and  also $N_{m+1}$ and $Q_{m+1}$. \\
For any regular Hamiltonian $f$ we have, using the Taylor expansion of $f\circ\phi_{\chi_{m+1}}^t$ between $t=0$ and $t=1$
$$f\circ  \phi^1_{\chi_{m+1}}=f+\{f,\chi_{m+1}\}+\int_0^1 (1-t)\{\{f,\chi_{m+1}\},\chi_{m+1}\}\circ \phi_{\chi_{m+1}}^t\dd t\,.$$
Therefore we get for $\om\in\D_{m+1}$ 
\begin{align*}
(h_m+q_m)\circ  \phi^1_{\chi_{m+1}}=h_{m+1}+q_{m+1}.
\end{align*}

\subsection{Iterative lemma}
Following the general scheme above we have
$$(h_0+q_0)\circ \phi^1_{\chi_{1}}\circ\cdots\circ \phi^1_{\chi_m}= h_{m}+q_{m}$$
where  $q_m$ is a polynomial of degree two and  $h_m=\om\cdot I +\lan z,N_m\bar z \ran$ with $N_m$ a Hermitian matrix. At step $m$  the Fourier series are truncated at order $K_m$ and the small divisors are controlled by $\ka_m$. Now we specify the choice of all the parameters for $m\geq 0$ in term of $\eps_m$ which will control  $[q_m]_{\D_m,\s_m}$. \\
First we 
define  $\eps_0=\eps$, $\s_0=\s$, $\D_0 = \D$ and for $m\geq 1$ we choose
\begin{align*}
\s_{m-1}-\s_m=&C_* \s_0 m^{-2},\qquad 
K_m=2(\s_{m-1}-\s_m)^{-1}\ln \eps_{m-1}^{-1},\qquad
\ka_{m}=\eps_{m-1}^{\frac 18} 
\end{align*}
where $(C_*)^{-1} =2\sum_{j\geq 1}\frac 1{j^2}$.\\

\begin{lemma}\label{iterative} There exists  $\eps_*>0$ depending on  $d$, $n$ such that, for  $|\eps|\leq\eps_*$ and
$$
\eps_{m}= \eps^{(3/2)^m}\ , \quad m\geq 0\,,
$$
 we have the following:\\
For all $m\geq 1$ there exist closed subsets $\D_m\subset\D_{m-1}$,
 $h_m=\om\cdot I  +\lan z,N_m \bar z \ran$  in normal form where $\D_m \ni \omega \mapsto N_m(\om) \in \M_H \in C^1$   and there exist
$\chi_m,q_m\in \mathcal Q(\D_m,\s_m)$ such that for $m\geq1$
\begin{itemize}
\item[(i)]  The symplectomorphism \begin{equation} \label{Phik} \phi_{m}\equiv\phi_{\chi_m}(\om) \ :\ \R^n\times\T^n\times\C^{2d} \to \R^n\times\T^n\times\C^{2d}, \quad \om\in \D_{m}\end{equation}
is an affine transformation in $(z,\bar z)$, analytic in $\theta\in\T^n_{\s_m}$ and $C^1$ in $\om\in\D_m$ of the form
\begin{equation}
\label{phi_m}
\phi_m(I,\theta,z,\bar z)=( g_m(I,\theta,z,\bar z),\theta, \Psi_m(\theta,z,\bar z)) \ , 
\end{equation}
where, for each $\theta\in\T^n$, $ (z,\bar z)\mapsto \Psi_m(\theta,z,\bar z)$ is a symplectic change of variable on $\C^{2n}$.\\
The map $\phi_m$ links   the Hamiltonian at step $m-1$ and the Hamiltonian at step  $m$, i.e.
$$
(h_{m-1}+q_{m-1})\circ \phi_{m}= h_m+q_m, \quad \forall \om\in\D_m \ .
$$
\item[(ii)]We have the estimates
\begin{align}
\label{DD}\meas(\D_{m-1}\setminus \D_{m})&\leq \eps_{m-1}^{\frac 19},\\
\label{NN}[\tilde N_{m-1}]_{s,\b}^{\D_m}&\leq \eps_{m-1},\\
\label{QQ}[q_m]_{s,\b}^{\D_m,\s_m}&\leq \eps_m,\\
\label{FiFi}\| \phi_m(\om)-\uno\|_{\L( \R^n\times\T^n\times\C^{2d})}&\leq C \eps_{m-1}^{\frac 12},\ \quad \forall   \om\in\D_m.
\end{align}
\end{itemize}
 \end{lemma}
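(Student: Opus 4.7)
The plan is to prove the iterative lemma by induction on $m\geq 0$, with base case $m=0$ given by $\D_0 = \D$, $h_0$ as in \eqref{h0}, and $q_0 = \eps W$, for which $[q_0]_{\s_0}\leq \eps = \eps_0$. For the inductive step, assuming the statement up to index $m-1$, I would apply Proposition \ref{prop:homo1} to $h_{m-1}$ and $q_{m-1}$ with parameters $\ka=\ka_m$, $K=K_m$, and loss of analyticity $\s_{m-1}-\s_m = C_*\s_0 m^{-2}$, then define $\phi_m:=\phi_{\chi_m}^1$, $N_m$ by \eqref{Nm}, and $q_m$ by \eqref{qm}. Before invoking the proposition one must verify that $N_{m-1}$ satisfies hypothesis \eqref{ass}: a telescoping use of \eqref{NN} gives $\norm{\p_\om^j(N_{m-1}-N_0)}\leq \sum_{j<m}\eps_j \leq 2\eps$ for $j=0,1$, which is below the threshold in \eqref{ass} provided $\eps_*$ is small enough depending on $\nu_0$ and $d$.

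The measure and normal-form bounds then follow by direct substitution. Proposition \ref{prop:homo1} gives $\meas(\D_{m-1}\setminus\D_m)\leq C K_m^n \ka_m$, and inserting $K_m=2(\s_{m-1}-\s_m)^{-1}\ln\eps_{m-1}^{-1}$ and $\ka_m=\eps_{m-1}^{1/8}$ yields a bound of order $m^{2n}(\ln\eps_{m-1}^{-1})^n \eps_{m-1}^{1/8}\leq \eps_{m-1}^{1/9}$ once $\eps_*$ is small, the gap between $1/8$ and $1/9$ absorbing the polynomial-and-logarithmic prefactors. The bound \eqref{NN} on $\tilde N_{m-1}$ is exactly \eqref{B} applied to $q_{m-1}$. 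For the symplectomorphism $\phi_m$, the key observation is that $\chi_m$ is a polynomial of degree at most two in $(z,\bar z)$ independent of $I$; its Hamiltonian vector field is therefore linear in $(z,\bar z)$ with $\dot\theta = 0$, so $\phi_m$ preserves $\theta$, is affine in $(z,\bar z)$, and inherits $C^1$ dependence on $\om$ from $\chi_m$, giving the structure \eqref{phi_m}. From \eqref{estim-homoS} and $\ka_m^{-2} = \eps_{m-1}^{-1/4}$ one gets $[\chi_m]_{\s_m}\lesssim \eps_{m-1}^{3/4}\ln(\eps_{m-1}^{-1})\,m^C$, from which $\norm{\phi_m - \uno}\leq C\eps_{m-1}^{1/2}$ in \eqref{FiFi} follows by solving the linear ODE defining the flow.

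The heart of the proof is the quadratic bound \eqref{QQ} on $q_m$. Two structural facts make this exact at the polynomial level: Poisson brackets of degree-$\leq 2$ polynomials independent of $I$ remain degree-$\leq 2$ polynomials independent of $I$, and the affine flow $\phi_{\chi_m}^t$ preserves polynomial degree; moreover, since $\phi_{\chi_m}^t$ acts trivially on $\theta$, composition does not shrink the analyticity strip. From \eqref{estim-homoR} and the tuned choice of $K_m$ one has $e^{-(\s_{m-1}-\s_m)K_m/2}=\eps_{m-1}$, so $[r_m]_{\s_m}\lesssim \eps_{m-1}^2\, m^{2n}$. Each of $h_m-h_{m-1}$, $r_m$ and $q_{m-1}$ has size $\lesssim \eps_{m-1}$, and the Poisson bracket with $\chi_m$ obeys $[\{A,\chi_m\}]_{\s_m}\leq C[A]_{\s_m}[\chi_m]_{\s_m}$, giving the integrand of \eqref{qm} size $\lesssim \eps_{m-1}^{7/4}\ln(\eps_{m-1}^{-1})\,m^C$. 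Adding $r_m$ and absorbing polynomial-logarithmic factors into the $\eps$-gain yields $[q_m]_{\s_m}\leq \eps_{m-1}^{3/2}= \eps_m$ for $\eps_*$ small. The main obstacle is purely the bookkeeping required to balance the loss of analyticity $\s_{m-1}-\s_m\sim m^{-2}$ and the small divisors $\ka_m = \eps_{m-1}^{1/8}$ against the super-exponential gain $\eps_m = \eps_{m-1}^{3/2}$; the exponents in the statement are chosen precisely for this balance, and the polynomial structure of the perturbation eliminates the Moyal-remainder terms that would otherwise arise at each step, leaving a clean Newton-type iteration.
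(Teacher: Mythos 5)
Your proposal is correct and follows essentially the same route as the paper: induction on $m$, verification of hypothesis \eqref{ass} by telescoping the $\tilde N_j$ estimates, application of Proposition~\ref{prop:homo1} with the prescribed parameters $\ka_m,K_m$, and then the bookkeeping of exponents ($\ka_m^{-2}=\eps_{m-1}^{-1/4}$, $e^{-(\s_{m-1}-\s_m)K_m/2}=\eps_{m-1}$, polynomial-logarithmic prefactors absorbed into the gap $1/8\to1/9$ and $7/4\to3/2$) to close the Newton iteration. Your quadratic bound $[r_m]\lesssim\eps_{m-1}^2 m^{2n}$ is in fact slightly sharper than the $\eps_{m-1}^{15/8}$ the paper writes at step $1$, but this is immaterial since both yield $[q_m]\leq\eps_m$ for $\eps_*$ small.
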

 \proof At step 1, $h_0=\om\cdot I +\lan z,N_0\bar z\ran$ and thus hypothesis \eqref{ass} is trivially satisfied and we can apply Proposition \ref{prop:homo1} to construct $\chi_1$, $N_1$, $r_1$ and $\D_1$ such that for $\om\in\D_1$
 $$\{h_0,\chi_1\}= \lan z,(N_1-N_0)\bar z\ran- q_0+r_1.$$
Then, using \eqref{estim:D}, we have
$$\meas(\D\setminus\D_1)\leq C K_1^{n}\ka_1\leq  \eps_0^{\frac 19}$$
for $\eps=\eps_0$ small enough. 
Using \eqref{estim-homoS} we have for $\eps_0$ small enough
$$
[\chi_1]_{\D_1,\s_1}\leq C \frac{K_1}{\ka_1^{2}(\s_0-\s_1)^n}\eps_0\leq  \eps_0^{\frac12}.
$$
Similarly using \eqref{estim-homoR}, \eqref{B} we have
$$\|N_1-N_0\|\leq  \eps_0,$$ and
$$
[r_1]_{\D_1,\s_1}\leq  C\frac{\eps_0^{\frac {15}8}}{(\s_1-\s_0)^n}\leq\eps_0^{\frac {7}4}
$$
for $\eps=\eps_0$ small enough.
In particular we deduce $\| \phi_1-\uno\|_{\L( \R^n\times\T^n\times\C^{2d})}\leq  \eps_0^{\frac 12}.$
 Thus using \eqref{qm} we get
for $\eps_0$ small enough
$$[q_1]_{\D_1,\s_1}\leq \eps_0^{3/2}=\eps_1.$$
 The form of the flow \eqref{phi_m} follows  since $\chi_1$ is a Hamiltonian of the form \eqref{q}.
  \medskip
 
 Now assume that we have verified Lemma \ref{iterative}  up to step $m$.
We want to perform the step $m+1$. We have $h_m=\om\cdot I +\lan z,N_m\bar z\ran$ and since 
$$ \|N_m-N_0\|\leq \|N_m-N_0\|+\cdots+ \|N_1-N_0\|\leq \sum_{j=0}^{m-1}\eps_j\leq 2\eps_0,$$
hypothesis \eqref{ass} is satisfied and we can apply Proposition \ref{prop:homo1}  to construct $\D_{m+1}$, $\chi_{m+1}$ and $q_{m+1}$. Estimates \eqref{DD}-\eqref{FiFi} at step $m+1$  are proved as we have proved the corresponding estimates  at step 1.
\endproof

\subsection{Transition to the limit and proof of Theorem \ref{KAMclassico}}

Let $\cE_\epsilon=\cap_{m\geq 0}\D_m.$ In view of \eqref{DD}, this is a closed set satisfying
$$\meas(\D\setminus\cE_\epsilon)\leq \sum_{m\geq 0} \eps_m ^{\frac 19}\leq 2 \eps_0^{\frac 19}.$$
Let us denote $\widetilde\phi_N=\phi_{1}\circ\cdots\circ \phi_N$. Due  to \eqref{FiFi} it satisfies for $M\leq N$ and for $\om\in\cE-\eps$
$$\| \widetilde\phi_N- \widetilde\phi_M\|_{\L( \R^n\times\T^n\times\C^{2d})}\leq  \sum_{m=M}^N\eps_m^{\frac 12}\leq 2\eps_M^{\frac 12}\,.$$ 
Therefore  $(\widetilde\phi_N)_N$ is a Cauchy sequence in $\L( \R^n\times\T^n\times\C^{2d})$. 
Thus when $N\to \infty$ the mappings  $\widetilde\phi_N$ converge to a limit mapping $\phi_\infty\in\L( \R^n\times\T^n\times\C^{2d}).$ 
Furthermore since the convergence is uniform on $\om\in\cE_\eps$ and  $\theta\in\T_{\s/2}$, $\phi_\infty^1$ depends analytically on $\theta$ and $C^1$ in  $\om$. Moreover, 
\begin{equation} \label{estim-Phiinf}\|\phi_\infty-\uno\|_{\L( \R^n\times\T^n\times\C^{2d})} \leq \eps_{0}^{\frac 12}\,.\end{equation}
By construction, the map $\widetilde\phi_m$ transforms the original Hamiltonian $h_0+q_0$ into $h_m+q _m$.
When $m\to\infty$, by \eqref{QQ} we get $q_m\to 0$  and by \eqref{NN}  we get $N_m\to N$  where \begin{align}\label{Nom}
N\equiv N(\om) = N_{0} + \sum_{k=1}^{+\infty}\tilde{N}_{k}
\end{align}
is a Hermitian matrix which is $C^1$ with respect to $\om\in\cE_\eps$. 
Denoting  $h_\infty(z,\bar z)= \omega \cdot I + \lan z,N(\om)\bar z \ran$ we have proved 
\begin{equation}
\label{final3}
(h + q(\theta)) \circ \phi_\infty =  h_\infty \ . 
\end{equation}
Furthermore  $\forall \om\in\cE_\eps$ we have, using \eqref{NN}, 
$$\norma{N(\om)-N_0}\leq \sum_{m=0}^\infty \eps_m\leq 2 \eps $$
and thus the eigenvalues of $N(\om)$, denoted $\nu_j^\infty(\om)$ satisfy \eqref{freq:est}.

It remains to explicit the affine symplectomorphism $\phi_\infty$.
At each step of the KAM procedure we have by Lemma \ref{iterative} 
 \begin{equation*}
\phi_m(I,\theta,z,\bar z)=( g_m(I,\theta,z,\bar z),\theta, \Psi_m(\theta,z,\bar z))
\end{equation*}
and therefore
\begin{equation*}
\phi_\infty(I,\theta,z,\bar z)=( g(I,\theta,z,\bar z),\theta, \Psi(\theta, z,\bar z))
\end{equation*}
where  $\Psi(\theta,z,\bar z)=\lim_{m\to\infty}\Psi_1\circ \Psi_2 \circ \cdots \circ \Psi_m$.

It is useful to go back to real variables $(x,\xi)$. More precisely  write each Hamiltonian $\chi_m$ constructed in the KAM iteration in the variables $(x,\xi)$:
\begin{equation}
\label{matrix}
\chi_m(\theta, x, \xi)=\frac{1}{2} \left(\begin{matrix}
x \\ \xi
\end{matrix}\right)\cdot E \, B_m(\theta) \left(\begin{matrix}
x \\ \xi
\end{matrix}\right) + U_m(\theta) \ , 
\qquad
E:=\left[\begin{matrix} 
0 & - \uno \\
\uno & 0 \end{matrix}\right] \ , 
\end{equation}
where   $B_m(\theta)$ is a skew-symmetric matrix of dimension $2d\times 2d$   and $U_m(\theta) \in\R^{2d}$, and they are both of size  $\eps_m$. Then $\Psi_m$ written in the real variables  has the form 
\begin{equation}
\label{ham.flow.r}
\Psi_m (\theta, x,\xi) = e^{ B_m(\theta)}(x,\xi) + T_m(\theta) \ , \qquad {\rm where } \ \  
T_m(\theta) := \int_0^1e^{(1-s)JB_m(\theta)}U_m(\theta) ds \ . 
\end{equation}
\begin{lemma}
\label{stitras}
There exists a sequence of  Hamiltonian matrices $A_l(\theta)$  and vectors $V_l(\theta)\in\R^{2d}$   such that 
\begin{equation}
\label{cano}
\Psi_{1}\circ...\circ \Psi_{l} (x, \xi) =e^{A_l(\theta)}(x,\xi) + V_l(\theta) \ \ \  \forall (x, \xi) \in\R^{2d} \ . 
\end{equation} 
Furthermore, there exist
 an Hamiltonian matrix $A_\om(\theta)$ and a vector $V_\om(\theta)\in\R^{2d}$
   such  that 
   \begin{align}
\lim_{l\rightarrow+\infty}e^{A_l(\theta)} = e^{A_\infty(\theta)} \ , \ \ \ 
\lim_{l\rightarrow+\infty} V_l(\theta) = V_\infty(\theta) \nonumber\\
\sup_{|{\rm Im } \theta| \leq \sigma/2 } \Vert A_\om(\theta) \Vert \leq C\epsilon \ , \ \ \ 
\sup_{|{\rm Im } \theta| \leq \sigma/2} \vert V_\om(\theta)\vert \leq C \epsilon 
\end{align}
and for each $\theta\in\T^n$,
$$\Psi(\theta,x,\xi)=e^{A_\om(\theta)}(x,\xi) + V_\om(\theta) \ \ \  \forall (x, \xi) \in\R^{2d} \ .$$
\end{lemma}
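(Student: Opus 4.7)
My strategy is twofold: first, an induction on $l$ showing that each finite composition $\Psi_{1} \circ \cdots \circ \Psi_{l}$ has the affine form \eqref{cano} with a Hamiltonian generator $A_{l}(\theta) \in {\rm sp}(2d)$ and a translation $V_{l}(\theta) \in \R^{2d}$; second, a Cauchy-type passage to the limit $l \to \infty$ that identifies $e^{A_{\infty}}$ and $V_{\infty}$ and checks their estimates.

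\textbf{Induction and uniform bounds.} Set $A_{1} := B_{1}$ and $V_{1} := T_{1}$, so that \eqref{cano} holds at $l = 1$ by \eqref{ham.flow.r}. Assuming the formula at step $l-1$, composing with $\Psi_{l}$ gives
\begin{equation*}
\Psi_{1} \circ \cdots \circ \Psi_{l}(x,\xi) = \left(e^{A_{l-1}(\theta)} e^{B_{l}(\theta)}\right)(x,\xi) + e^{A_{l-1}(\theta)} T_{l}(\theta) + V_{l-1}(\theta).
\end{equation*}
Each $B_{m}(\theta)$ lies in ${\rm sp}(2d)$ (it is the infinitesimal symplectic transformation attached to the quadratic Hamiltonian $\chi_{m}$), so the product $M_{l}(\theta) := e^{A_{l-1}(\theta)} e^{B_{l}(\theta)}$ is symplectic, and I define $A_{l}(\theta)$ as its principal logarithm via $\log(\uno + X) = \sum_{k \geq 1} (-1)^{k+1} X^{k}/k$, which maps symplectic matrices close to $\uno$ back into ${\rm sp}(2d)$. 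Setting $V_{l} := e^{A_{l-1}} T_{l} + V_{l-1}$ closes the induction. The estimate \eqref{estim-homoS} cascaded through Lemma \ref{iterative} furnishes a rapidly decaying summable sequence $\delta_{m}$ with $[\chi_{m}]_{\sigma_{m}} \leq \delta_{m}$ and $\sum_{m} \delta_{m} \leq C\epsilon$, whence $\|B_{m}(\theta)\| + |T_{m}(\theta)| \leq C\delta_{m}$ uniformly for $|\Im \theta| \leq \sigma/2$, and consequently $\|A_{l}(\theta)\| \leq C\sum_{m=1}^{l} \delta_{m} \leq C\epsilon$. This keeps $M_{l}(\theta)$ uniformly inside the convergence domain of the logarithm for all $l$ and all $\theta$ in the strip.

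\textbf{Passage to the limit.} The telescoping identities $e^{A_{l+1}} - e^{A_{l}} = e^{A_{l}}(e^{B_{l+1}} - \uno)$ and $V_{l+1} - V_{l} = e^{A_{l}} T_{l+1}$, combined with $\|e^{A_{l}}\| \leq e^{C\epsilon}$, yield $\|e^{A_{l+1}(\theta)} - e^{A_{l}(\theta)}\| + |V_{l+1}(\theta) - V_{l}(\theta)| \leq C \delta_{l+1}$ uniformly on $|\Im \theta| \leq \sigma/2$. Summability of $\delta_{m}$ then provides Cauchy sequences with analytic limits $e^{A_{\infty}(\theta)}$ and $V_{\infty}(\theta)$ satisfying the required $\epsilon$-smallness estimates; the $C^{1}$-dependence in $\omega$ propagates in exactly the same way. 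Continuity of the principal logarithm on $\{M : \|M - \uno\| < 1\}$ together with the fact that ${\rm sp}(2d)$ is closed ensures that $A_{\infty}(\theta)$ is well-defined and Hamiltonian. Finally, the pointwise equality $\Psi(\theta, \cdot) = \lim_{l} \Psi_{1} \circ \cdots \circ \Psi_{l}(\theta, \cdot)$ established during the KAM iteration and uniqueness of limits force $\Psi(\theta, x, \xi) = e^{A_{\infty}(\theta)}(x,\xi) + V_{\infty}(\theta)$.

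\textbf{Main difficulty.} The only delicate point is ensuring that $e^{A_{l}(\theta)}$ remains uniformly inside the convergence disk of the principal matrix logarithm for every $l$ and every $\theta$ in the complex strip; this is exactly what the fast summability $\sum_{m} \delta_{m} = O(\epsilon)$ and the smallness of $\epsilon_{*}$ give us. Everything else is routine bookkeeping of affine compositions and standard Cauchy estimates.
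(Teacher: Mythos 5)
Your proof is correct and takes essentially the same route as the paper: both show that the composed linear parts form a convergent infinite product of matrices of the form $\1 + S_j$ with $\|S_j\|$ summable, invoke the principal matrix logarithm to land in ${\rm sp}(2d)$ (using that a symplectic matrix near $\1$ has a Hamiltonian logarithm), and track the translations through the same recursion $V_{l} = e^{A_{l-1}} T_{l} + V_{l-1}$ with $V_1 = T_1$. The only cosmetic difference is that you organize the argument as an explicit induction with the telescoping identity $e^{A_{l+1}} - e^{A_l} = e^{A_l}(e^{B_{l+1}}-\uno)$, while the paper shows directly that the partial products $\prod_{j\le l}(\1+S_j) = \1 + M_l$ stay uniformly close to $\1$ before taking a single logarithm; the substance and the use of the summable decay of the $[\chi_m]_{\sigma_m}$ are identical.
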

\begin{proof}
Recall  that  $\phi_j =e^{B_j} +T_j$ where  $T_j$ is a translation by the vector $T_j$  with the  estimates
$\Vert B_j\Vert\leq C\epsilon_j$, $ \Vert T_j\Vert \leq C\epsilon_j$.
So  we have $e^{B_j} = \1 +S_j$ with $\Vert S_j\Vert\leq C\epsilon_j$. Then the  infinite product 
$\prod_{1\leq j<+\infty}e^{B_j} $ is convergent. Moreover we have 
$\prod_{1\leq j\leq l}e^{B_j}  = \1 +M_l$  with $\Vert M_l\Vert \leq C\epsilon$  so we have 
$\prod_{1\leq j<+\infty}e^{B_j}  = \1 +M$  with $\Vert M\Vert \leq C\epsilon$. This is proved by using
$$
\prod_{1\leq j\leq l}(\1+S_j) = \1 +S_l +S_{l-1}S_l+\cdots S_1S_2\cdots S_l
$$ and estimates on $\Vert S_j\Vert$. \\ {So, $M_l$ has a small
  norm and therefore $A_l:=\log(\1+M_l)$ is well defined. Furthermore,
  by construction $\1+M_l\in{\rm Sp}(2d)$ and therefore its logarithm
  is a Hamiltonian matrix, namely $A_l\in{\rm sp}(2d)$ for $1\leq
  l\leq +\infty$.}  
\\ Now we have to include the translations.  By induction on $l$ we
have
\begin{equation*}
\phi_{1}\circ...\circ \phi_{l}(x,\xi) = {\rm e}^{A_l}(x,\xi) + V_l \ ,
\end{equation*}
 with $V_{l+1} ={\rm e}^{A_l}T_{l+1} + V_l$  and $V_1 =T_1$. 
Using the previous estimates we have 
$$
\Vert V_{l+1}-V_l\Vert \leq C\Vert T_{l+1}\Vert \leq C\epsilon_{l}. 
$$
Then   we get that $\displaystyle{\lim_{l\rightarrow +\infty}V_l =V_\infty}$ exists.
\end{proof}

\appendix

\section{An example of growth of Sobolev norms (following Graffi and Yajima)}
\label{example}
In this appendix we are going to study the Hamiltonian
\begin{equation}
  \label{graffi}
H:= -\frac{1}{2}\partial_{xx}+\frac{x^2}{2}+ax\sin\omega t
\end{equation}
and prove that it is reducible to the Harmonic oscillator if
$\omega\not = \pm 1$, while the system exhibits growth of Sobolev
norms in the case $\omega=\pm 1$. Actually the result holds in a quite
more general situation, but we think that the present example  can give a full understanding of the situation
with as little techniques as possible. We also remark that in this
case it is not necessary to assume that the time dependent part is
small. 

Finally we recall that \eqref{graffi} with $\omega = \pm 1$ was
  studied by Graffi and Yajima as an example of Hamiltonian whose
  Floquet spectrum is absolutely continuous (despite the fact that the
  unperturbed Hamiltonian has discrete spectrum).  Exploiting the
  results of \cite{enve, buni} one can conclude from \cite{graffi} that
  the expectation value of the energy is not bounded in this
  model. The novelty of the present result rests in the much more
  precise statement ensuring growth of Sobolev norms.

As we already pointed out, 
in order to get reducibility of the Hamiltonian \eqref{graffi}, it is
enough to study the corresponding classical Hamiltonian, in particular
proving its reducibility; this is what we will do. It also turns out that all the procedure is clearer
working as much as possible at the level of the equations.

So, consider the classical Hamiltonian system
\begin{equation}
  \label{clagraffi}
h:=\frac{x^2+\xi^2}{2}+ax\sin(\omega t)\ ,
\end{equation}
whose equations of motion are
\begin{equation}
  \label{eq.gra}
  \left\{
  \begin{matrix}
   & \dot x=\xi
    \\
    &\dot\xi=-x-a\sin(\omega t)
  \end{matrix}
  \right. \iff\quad \ddot x+x+a\sin(\omega t) = 0 \ .
\end{equation}
\begin{proposition}
  \label{prop.gra}
Assume that $\omega\not=\pm 1$.  Then there exists a time periodic
canonical transformation conjugating \eqref{clagraffi} to 
\begin{equation}
  \label{clagraffi2}
h':=\frac{x^2+\xi^2}{2}\ .
\end{equation}
If $\omega=\pm1$ then the system is canonically conjugated to
\begin{equation}
  \label{clagraffi3}
h':=\pm\frac{a}{2}\xi\ .
\end{equation}
In both cases the transformation has the form \eqref{kamclach}\ .
\end{proposition}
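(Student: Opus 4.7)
For the nonresonant case $\omega \neq \pm 1$, my plan is to cancel the time-dependent forcing by a phase-space translation along a particular periodic solution of the forced oscillator. The equation $\ddot q + q + a\sin(\omega t) = 0$ admits the periodic particular solution $q_p(t) = \frac{a}{\omega^2-1}\sin(\omega t)$, well-defined precisely because $\omega^2 \neq 1$. I then introduce the time-periodic canonical translation $(x,\xi) = (x' + q_p(t),\,\xi' + \dot q_p(t))$, generated in the sense of \eqref{timedep} by the linear-in-$(x,\xi)$ Hamiltonian $\chi(t,x,\xi) = q_p(t)\xi - \dot q_p(t)\,x$. Applying formula \eqref{timetras}, a direct computation shows that the coefficient of $x'$ in the transformed Hamiltonian equals $\ddot q_p + q_p + a\sin(\omega t)$, which vanishes by construction, while the coefficient of $\xi'$ vanishes identically. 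One is left with $h'(t,x',\xi') = \frac{x'^2+\xi'^2}{2} + f(t)$ for some scalar $f(t)$; a pure function of time contributes nothing to Hamilton's equations, so this gives \eqref{clagraffi2}, and the transformation visibly has the form \eqref{kamclach}.

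For the resonant case $\omega = \pm 1$ the same recipe fails because every particular solution of $\ddot q + q + a\sin(\pm t)=0$ grows secularly like $t\cos t$ and therefore is not periodic. My plan is a two-step procedure. First I pass to the rotating frame via the time-periodic canonical rotation $(x,\xi) = (x'\cos t + \xi'\sin t,\, -x'\sin t + \xi'\cos t)$, generated by the time-independent quadratic Hamiltonian $\chi = -\tfrac12(x^2+\xi^2)$. By \eqref{timetras}, this kills the harmonic part and leaves $h_1 = a(x'\cos t + \xi'\sin t)\sin(\omega t)$. For $\omega=1$, a product-to-sum expansion yields $h_1 = \tfrac{a}{2}\xi' + \tfrac{a}{2}x'\sin(2t) - \tfrac{a}{2}\xi'\cos(2t)$, which cleanly isolates the time-independent resonant term from a $\pi$-periodic, mean-zero oscillating remainder. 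The remainder is linear in $(x',\xi')$, so it can be removed by a second time-periodic translation with $q(t) = -\tfrac{a}{4}\sin(2t)$, $p(t) = \tfrac{a}{4}\cos(2t)$, chosen so that the induced $\dot q,\dot p$ contributions from \eqref{timetras} exactly cancel the oscillating coefficients of $x''$ and $\xi''$. The result is $h' = \tfrac{a}{2}\xi''$; the case $\omega = -1$ is symmetric and yields $h' = -\tfrac{a}{2}\xi''$.

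The main conceptual point — really the only obstacle — is that the nonresonant translation cannot be applied directly when $\omega = \pm 1$. The role of the rotating frame is to diagonalize the action of $\partial_t - \{h_0,\cdot\}$ on linear symbols, so that the resonant Fourier mode of the forcing becomes a constant in time and survives as the nontrivial term $\pm\tfrac{a}{2}\xi$, while all non-resonant Fourier modes are ready to be killed by a periodic translation. In both cases the final canonical map is a composition of a symplectic linear map (the identity in the nonresonant case, the rotation in the resonant case) with a translation, so it is affine in $(x,\xi)$ with smooth periodic coefficients and has exactly the form \eqref{kamclach}.
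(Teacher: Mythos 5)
Your argument is essentially the paper's: a time-periodic phase-space translation along the unique periodic solution of $\ddot q + q + a\sin(\omega t)=0$ in the nonresonant case, and a rotating-frame conjugation followed by a translation in the resonant case, with your explicit $q(t)=-\tfrac{a}{4}\sin(2t)$, $p(t)=\tfrac{a}{4}\cos(2t)$ matching the paper's $\chi_2$. One bookkeeping slip worth fixing, though it does not affect the outcome: the rotation $(x,\xi) = (x'\cos t + \xi'\sin t,\,-x'\sin t + \xi'\cos t)$ is \emph{not} the time-one flow of the time-independent Hamiltonian $-\tfrac12(x^2+\xi^2)$ (whose time-one flow is the fixed rotation by angle $-1$, not time-periodic). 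In the convention of \eqref{timedep}--\eqref{timetras} the generator must be $\chi_1(t,x,\xi)=\tfrac{t}{2}(x^2+\xi^2)$, whose $\tau$-flow at frozen $t$ is rotation by $\tau t$; this is what the paper uses, and it is what your subsequent claim implicitly relies on, since the term $\int_0^1 \partial_t\chi_1\,d\tau = \tfrac12(x'^2+\xi'^2)$ in \eqref{timetras} is exactly what cancels the quadratic part to leave $h_1 = a(x'\cos t + \xi'\sin t)\sin(\omega t)$.
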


\begin{corollary}
  \label{cor.growth}
  In the case $\omega=\pm 1$, for any $s>0$ and $\psi_0 \in \cH^s$, 
  there exists a constant
  $0<C_s = C_s(\norm{\psi_0}_{\cH^s})$ s.t. the solution  of the Schr\"odinger equation with
  Hamiltonian \eqref{graffi} and initial datum $\psi_0$ fulfills
  \begin{equation}
    \label{cre}
\norma{\psi(t)}_{\cH^s}\geq C_s  \langle t\rangle^s ,\ \ \ \forall t\in\R.
  \end{equation}
\end{corollary}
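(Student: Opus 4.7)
The plan is to use the classical reduction of Proposition \ref{prop.gra}, transport it to the quantum level via the exact quantum--classical correspondence of Proposition \ref{prop:flow}, and then exhibit the growth from the explicit translation dynamics in the reduced variables.

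First I would invoke Proposition \ref{prop.gra} in the resonant case $\omega=\pm 1$ to obtain a $2\pi$-periodic affine symplectic change of variables of the form \eqref{kamclach} conjugating the classical Hamiltonian \eqref{clagraffi} to $h'=\pm\tfrac{a}{2}\xi$. This change of variables is the time-1 flow of a polynomial Hamiltonian $\chi(t,x,\xi)$ of degree at most $2$ whose coefficients are smooth and, by periodicity, uniformly bounded in $t$. By Proposition \ref{prop:flow}, the quantum unitary $U(t):=e^{-\im\chi^w(t,x,D)}$ conjugates the Schr\"odinger equation with Hamiltonian \eqref{graffi} to $\im\dot\varphi=\pm\tfrac{a}{2}(-\im\partial_x)\varphi$, and by Lemma \ref{lem:flow}(iv) both $U(t)$ and $U(t)^{-1}$ are bounded $\cH^s\to\cH^s$ with norms uniform in $t\in\R$. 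Hence it suffices to prove the announced lower bound for $\varphi(t):=U(t)\psi(t)$.

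The reduced equation is explicit: $\varphi(t,x)=\varphi_0(x\mp\tfrac{a}{2}t)$ with $\varphi_0=U(0)\psi_0\in\cH^s$, so the task reduces to estimating the weighted Sobolev norm of a translate. Denoting $\tau_b f(x):=f(x-b)$, the identity $\tau_b^{-1}H_0\tau_b=-\partial_x^2+(x+b)^2$ together with functional calculus gives
\begin{equation*}
\|\tau_b\varphi_0\|_{\cH^s}^2 \;=\; \bigl\langle\bigl(-\partial_x^2+(x+b)^2\bigr)^s\varphi_0,\varphi_0\bigr\rangle.
\end{equation*}
The key estimate I need is $\|\tau_b\varphi_0\|_{\cH^s}^2\gtrsim \langle b\rangle^{2s}\|\varphi_0\|_{L^2}^2$. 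For $s\in[0,1]$ this follows from L\"owner--Heinz operator monotonicity, since $-\partial_x^2+(x+b)^2\geq(x+b)^2$ as self-adjoint operators implies $\bigl(-\partial_x^2+(x+b)^2\bigr)^s\geq(x+b)^{2s}$, and $\int(x+b)^{2s}|\varphi_0|^2\,dx\sim|b|^{2s}\|\varphi_0\|_{L^2}^2$ as $|b|\to\infty$. For $s>1$ I would instead work in the shifted Hermite basis (which diagonalizes $-\partial_x^2+(x+b)^2$ since the latter is unitarily equivalent to $H_0$ via $\tau_b$) and apply Jensen's inequality to the convex function $\lambda\mapsto\lambda^s$, reducing the problem to the already established $s=1$ bound $\bigl\langle\bigl(-\partial_x^2+(x+b)^2\bigr)\varphi_0,\varphi_0\bigr\rangle\sim b^2\|\varphi_0\|_{L^2}^2$.

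Finally, combining the above with the trivial bound $\|\varphi(t)\|_{\cH^s}\geq\|\varphi_0\|_{L^2}=\|\psi_0\|_{L^2}>0$ (which follows from $H_0\geq 1$) and specializing to $b=\pm\tfrac{a}{2}t$ yields $\|\varphi(t)\|_{\cH^s}\geq C_s\langle t\rangle^s$ for all $t\in\R$; the uniform $\cH^s$-bound on $U(t)^{-1}$ from Lemma \ref{lem:flow}(iv) then transfers this estimate to $\|\psi(t)\|_{\cH^s}$. I expect the main technical obstacle to be precisely the lower bound $\|\tau_b\varphi_0\|_{\cH^s}^2\gtrsim\langle b\rangle^{2s}\|\varphi_0\|_{L^2}^2$ for general (non-integer) $s>1$, as it requires a genuine spectral/functional-calculus argument about the shifted harmonic oscillator; the remaining parts of the proof are a direct assembly of the structural results already available.
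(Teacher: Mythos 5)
Your proposal follows the same overall route as the paper: invoke the classical reduction of Proposition \ref{prop.gra}, transport it to the quantum level via Proposition \ref{prop:flow}, identify the reduced equation as a transport equation, and then estimate the $\cH^s$-norm of a translate. Two remarks on the details.

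First, the claim that the conjugating transformation is the time-1 flow of a quadratic Hamiltonian ``whose coefficients are \ldots by periodicity, uniformly bounded in $t$'' is not quite accurate, and Lemma \ref{lem:flow}(iv) cannot be applied directly to the full conjugation. The paper's resonant reduction uses the generator $\chi_1 = \frac{t}{2}(x^2+\xi^2)$, whose quadratic coefficient is $t/2$ and grows without bound; only its flow $e^{-JBt}$ is periodic, not the generator. The paper handles this by factoring the unitary as $e^{-\im t H_0}\circ e^{-\im \chi_2^w(t,x,D)}$: the first factor is an $\cH^s$-isometry for every $s$ simply because $H_0$ commutes with $H_0^{s/2}$ (no appeal to Lemma \ref{lem:flow} needed), while $\chi_2$ is degree one with genuinely bounded periodic coefficients, so Lemma \ref{lem:flow}(iv) applies to the second factor alone. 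Your conclusion --- that the conjugation and its inverse are $\cH^s$-bounded uniformly in $t$ --- is correct, but the stated justification is off, and a literal application of Lemma \ref{lem:flow}(iv) to a single unified $\chi$ would be unjustified.

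Second, your L\"owner--Heinz ($s\leq 1$) plus Jensen ($s>1$) argument for the lower bound $\|\tau_b\varphi_0\|_{\cH^s}^2\gtrsim\langle b\rangle^{2s}\|\varphi_0\|_{L^2}^2$ is a reasonable and correct way to make precise the ``simple computation'' the paper leaves implicit. One small caveat: in order for the estimate to hold with a constant depending only on the indicated norms and not on the specific profile of $\varphi_0$, you need to quantify the tail control $\int_{|x|>R}|\varphi_0|^2\leq C_s R^{-2s}\|\varphi_0\|_{\cH^s}^2$ (which indeed follows from $\cH^s$-membership), so that the threshold beyond which $|b|^{-2s}\int(x+b)^{2s}|\varphi_0|^2$ is within a factor $1/2$ of $\|\varphi_0\|_{L^2}^2$ is controlled by $\|\varphi_0\|_{\cH^s}/\|\varphi_0\|_{L^2}$; this is implicit in the paper as well. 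With those two points patched, the proposal is a faithful and slightly more detailed version of the paper's argument.
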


Before proving the theorem, recall that by the general result of \cite[Theorem 1.5]{maro}, any solution of the Schr\"odinger equation with Hamiltonian \eqref{graffi} fulfills the a priori bound 
 \begin{equation}
    \label{cre2}
\norma{\psi(t)}_{\cH^s}\leq C_s' \left( 
\norma{\psi_0}_{\cH^s} + |t|^s \norm{\psi_0}_{\cH^0} \right),\; \forall t\in\R, 
\end{equation}
  which is therefore sharp. 

\begin{proof}[Proof of Proposition \ref{prop.gra}.]  We look for a translation
\begin{equation}
  \label{transl}
x=x'-f(t)\ , \quad \xi=\xi'-g(t)\ ,
\end{equation}
with $f$ and $g$ time periodic functions to be determined in such a way to
eliminate time from \eqref{eq.gra}. Writing the equations for
$(x', \xi')$, one gets
\begin{align*}
\dot x'=\xi'-g+\dot f\ ,\quad \dot \xi'=-x'-a\sin (\omega t)+ \dot g + f \ ,
\end{align*}
which reduces to the harmonic oscillator by choosing
\begin{equation}
  \label{choose}
  \left\{
  \begin{matrix}
    -a\sin(\omega t)+\dot g+f=0
    \\
    -g+\dot f=0
  \end{matrix}
  \right.\iff \ddot f+f=a \sin(\omega t)
\end{equation}
which has a solution of period $2\pi/\omega$ only if
$\omega\not=\pm 1$. In such a case the only solution having the correct period is
$$
f=\frac{a}{1-\omega^2}\sin(\omega t)\ ,\quad g=\frac{a\omega
}{1-\omega^2}\cos(\omega t)  \ .
$$
Then the transformation \eqref{transl} is a canonical transformation
generated as the time one flow of the auxiliary Hamiltonian
$$\chi:=-\xi \frac{a}{1-\omega^2}\sin(\omega t)+ x \frac{a\omega
}{1-\omega^2}\cos(\omega t)
$$
which thus conjugates the classical Hamiltonian \eqref{clagraffi} to the
Harmonic oscillator; of course the quantization of $\chi$ conjugates
the quantum system to the quantum Harmonic oscillator, as follows by Proposition \ref{prop:flow}.\\

We come to the resonant case, and, in order to fix ideas, we take
$\omega=1$. In such a case the flow of the Harmonic oscillator is
periodic of the same period of the forcing, and thus its flow can be
used to reduce the system. 

In a slight more abstract way, consider a Hamiltonian system with
Hamiltonian
$$
H:=\frac{1}{2}\langle z;Bz\rangle+\langle z; b(t)\rangle
$$
with $z:=(x,\xi)$, $B$ a symmetric matrix, and $b(t)$ a vector valued
time periodic function. Then, using the formula \eqref{timetras}, it
is easy to see that the auxiliary time dependent Hamiltonian
\begin{equation}
\label{chi1graffi}
\chi_1:=\frac{t}{2}\langle z;Bz\rangle
\end{equation}
 generates a time periodic transformation which conjugates the
system to
\begin{equation*}
h':=\langle z;e^{-JBt}b(t)\rangle\ 
\end{equation*}
($J$ being the standard symplectic matrix).
An explicit computation shows that in our case
\begin{equation}
  \label{reso.final}
h'=\frac{a}{2} x\sin(2t)-\frac{a}{2}\xi\cos(2t)+\frac{a}{2}\xi\ .
\end{equation}
Then in order to  eliminate the two time
periodic terms in \eqref{reso.final} it is sufficient to use the canonical transformation generated by the Hamiltonian
\begin{equation}
\label{chi2graffi}
\chi_2:=-\xi \frac{a}{4}\sin(2 t)- x\frac{a
}{4}\cos(2t)  \ ,
\end{equation}
which  reduce to \eqref{clagraffi3}.
\end{proof}

\begin{proof}[Proof of Corollary \ref{cor.growth}] To fix ideas we
  take $\omega=1$. 
Let  $\chi_1^w \equiv \frac{t}{2}(-\partial_{xx} + x^2)$ and $\chi_2^w$ be  the Weyl quantization of the Hamiltonians \eqref{chi1graffi} respectively \eqref{chi2graffi}. By the proof of Proposition \ref{prop.gra}, the changes of coordinates 
\begin{equation}
\label{changegraffi}
\psi = e^{-\im  t H_0} \psi_1 \ , \quad \psi_1 = e^{-\im  \chi_2^w(t, x, D)} \vf \ , \qquad H_0 := \frac{1}{2}(-\partial_{xx} + x^2)
\end{equation}
conjugate the Schr\"odinger equation with Hamiltonian \eqref{graffi} to the Schr\"odinger equation with Hamiltonian \eqref{clagraffi}, namely the transport equation
\begin{equation*}
\partial_t \vf = - \frac{a}{2} \partial_x \vf \ .
\end{equation*}
The solution of this transport equation is given clearly by
\begin{equation*}
\vf(t,x) = \vf_0 (x-\frac{a}{2} t)
\end{equation*}
where $\vf_0$ is the initial datum. Now a simple computation shows that 
$$
\liminf_{\vert t\vert\rightarrow +\infty}\vert t\vert^{-s}\norm{\vf(t)}_{\cH^s} \geq \left(\frac{\vert a\vert}{2}\right)^s\norm{\vf_0}_{\cH^0}.
$$
In particular there exists a constant
  $0<C_s = C_s(\norm{\vf_0}_{\cH^s})$ s.t. 
\begin{equation}
\label{est.growth}
\norm{\vf(t)}_{\cH^s} \geq C_s \langle t \rangle^s \ .
\end{equation}
Since the transformation \eqref{changegraffi} maps $\cH^s$ to $\cH^s$ uniformly in time (see also Lemma \ref{lem:flow}) estimate \eqref{est.growth} holds also for the original variables.
\end{proof}

We remark that by a similar procedure one can also prove the following
slightly more general result.

\begin{theorem}
\label{graffigen}
Consider the classical Hamiltonian system
\begin{equation}
\label{gen.gra.1}
h=\sum_{j=1}^{d}\nu_j\frac{x_j^2+\xi_j^2}{2}+\sum_{j=1}^{d}\left(g_j(\omega
  t)x_j+f_j(\omega
  t)\xi_j\right)\ ,
\end{equation}
with $f_j,g_j\in C^r(\T^n)$. 
\begin{itemize}
\item[(1)] If there exist $\gamma>0$ and $\tau>n+1$ s.t. 
\begin{equation}
\label{dio.gra}
\left|\omega\cdot k\pm\nu_j\right|\geq
\frac{\gamma}{1+|k|^\tau}\ ,\quad \forall k\in \Z^n\ ,\quad j=1,...,d 
\end{equation}
and $r>\tau +1+n/2$, then there exists a time quasiperiodic
canonical transformation of the form \eqref{kamclach} conjugating the
system to\footnote{Actually the transformation is just a translation,
so in this case one has $A\equiv 0$.} 
$$
h=\sum_{j=1}^{d}\nu_j\frac{x_j^2+\xi_j^2}{2}\ .
$$
\item[(2)] If there exist $0\not=\bar k\in \Z^n$ and $\bar j$, s.t.
\begin{equation}
\label{resonant}
\omega\cdot \bar k-\nu_{\bar j}=0\ ,
\end{equation}
and there exist $\gamma>0$ and $\tau$ s.t. 
\begin{equation}
\label{dio.gra2}
\left|\omega\cdot k\pm\nu_j\right|\geq
\frac{\gamma}{1+|k|^\tau}\ ,\quad \forall (k,j)\not=(\bar k,\bar j) 
\end{equation}
and $r>\tau+1+\frac{n}{2}$, then there exists a time quasiperiodic
canonical transformation of the form \eqref{kamclach} conjugating the
system to 
$$
h=\sum_{j\not=\bar
  j}\nu_j\frac{x_j^2+\xi_j^2}{2}+c_1x_{\bar j}+c_2\xi_{\bar j}\ ,
$$
with $c_1,c_2\in\R$.
\end{itemize}
\end{theorem}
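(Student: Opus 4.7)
My plan is to lift the two-step argument of Proposition \ref{prop.gra} to arbitrary dimension: a quasiperiodic translation in case (1), and a rotation in the resonant direction followed by a quasiperiodic translation in case (2). Both ingredients are canonical transformations generated by polynomial (indeed at most quadratic) Hamiltonians, so by \eqref{timetras} the reduction boils down to solving a pair of linear ODEs on $\T^n$ by Fourier expansion.

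For case (1), I would look for the generating Hamiltonian
\[
\chi_\omega(\omega t,x,\xi) = \sum_{j=1}^d \bigl(-\xi_j F_j(\omega t) + x_j G_j(\omega t)\bigr),
\]
whose time-$1$ flow is the translation $(x_j,\xi_j) = (x_j' - F_j(\omega t),\,\xi_j' - G_j(\omega t))$. Imposing that the primed variables satisfy the unperturbed harmonic equations yields, for each $j$, two real ODEs on $\T^n$, which bundled into $Z_j := F_j + \im G_j$ read
\[
\omega\cdot\nabla_\theta Z_j + \im\nu_j Z_j = -f_j + \im g_j.
\]
In Fourier, $\hat Z_j(k) = (-\hat f_j(k) + \im \hat g_j(k))/(\im (\omega\cdot k + \nu_j))$, and the Diophantine hypothesis \eqref{dio.gra} bounds the divisor from below by $\gamma(1+|k|^\tau)^{-1}$, giving a loss of $\tau$ derivatives. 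The assumption $r > \tau + 1 + n/2$ then ensures (via Sobolev embedding $H^{r-\tau} \hookrightarrow C^1$) that $F_j,G_j \in C^1(\T^n;\R)$. The transformation so obtained is a pure translation, matching \eqref{kamclach} with $A_\omega \equiv 0$.

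For case (2), I would first eliminate the harmonic oscillator in the resonant direction via the auxiliary Hamiltonian $\chi_1(t,x,\xi) := t\,\nu_{\bar j}(x_{\bar j}^2+\xi_{\bar j}^2)/2$, exactly as in \eqref{chi1graffi}. Invoking \eqref{timetras}, and using that the quadratic term in direction $\bar j$ is conserved by its own flow and thus canceled by the $\int \partial_t\chi_1\,d\tau$ correction, the transformed Hamiltonian reads
\[
h' = \sum_{j\neq \bar j}\nu_j\frac{x_j^2+\xi_j^2}{2} + \sum_{j\neq \bar j}(g_j x_j + f_j\xi_j) + G_{\bar j}'(t) x_{\bar j} + F_{\bar j}'(t)\xi_{\bar j},
\]
with $G_{\bar j}'(t) - \im F_{\bar j}'(t) = (g_{\bar j}(\omega t) - \im f_{\bar j}(\omega t))\,e^{-\im \nu_{\bar j} t}$. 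Since $\nu_{\bar j} = \omega\cdot \bar k$, these are quasiperiodic functions of $\omega t$ whose Fourier $0$-mode survives as the unremovable constants $c_1,c_2$. A second, case-(1)-style translation then removes every remaining time-dependent linear term. The small divisors involved are $\omega\cdot k \pm \nu_j$ with $j \neq \bar j$ (for which $(k,j)\neq(\bar k,\bar j)$ automatically), controlled by \eqref{dio.gra2}; and $\omega\cdot \tilde k$ for $\tilde k\neq 0$ arising in the rotated $\bar j$-direction, which reduces to \eqref{dio.gra2} via the identity
\[
|\omega\cdot\tilde k| = |\omega\cdot(\tilde k+\bar k) - \nu_{\bar j}| \geq \frac{\gamma}{1+|\tilde k+\bar k|^\tau} \gtrsim \frac{\gamma}{1+|\tilde k|^\tau}.
\]
The end result is $\sum_{j\neq \bar j}\nu_j(x_j^2+\xi_j^2)/2 + c_1 x_{\bar j} + c_2 \xi_{\bar j}$ as required.

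The main obstacle I foresee is not in the Fourier analysis itself but in the bookkeeping: checking that the composed transformation is affine in $(x,\xi)$, of the form $e^{A_\omega(\omega t)}(x,\xi) + V_\omega(\omega t)$ as required by \eqref{kamclach}. In case (1) $A_\omega\equiv 0$ (see the footnote), but in case (2) the step-(a) rotation contributes a non-trivial $2\times 2$ block acting on $(x_{\bar j},\xi_{\bar j})$, which must depend on $\theta\in\T^n$ only through $\bar k\cdot\theta$ so that $A_\omega(\omega t)$ is genuinely quasiperiodic. Precisely this is guaranteed by the resonance condition $\nu_{\bar j}=\omega\cdot\bar k$, which turns the linear-in-$t$ rotation into a Fourier-mode-$\bar k$ rotation in $\omega t$. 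Everything else—solvability of the Fourier equations and the $C^1$ regularity with threshold $r>\tau+1+n/2$—is standard small-divisor analysis balancing the $(1+|k|^\tau)$ loss against the Fourier decay of a $C^r$ symbol.
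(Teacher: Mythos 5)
Your argument is correct and is precisely the filling-in of the proof the paper leaves implicit: the paper states Theorem~\ref{graffigen} with the single remark that it follows ``by a similar procedure'' from Proposition~\ref{prop.gra}, and what you do --- a quasiperiodic translation solved mode-by-mode with small divisors $\omega\cdot k\pm\nu_j$ in case (1), and in case (2) the resonant rotation generated by $\chi_1 = t\,\nu_{\bar j}(x_{\bar j}^2+\xi_{\bar j}^2)/2$ followed by a translation, with the crucial observation that $\nu_{\bar j}=\omega\cdot\bar k$ makes $e^{J\nu_{\bar j}t}$ a genuine function of $\theta=\omega t\in\T^n$ so the composed transformation retains the form \eqref{kamclach} --- is exactly that procedure lifted to $d$ dimensions. (Only cosmetic: in your complexified homological equation the source should read $-f_j-\im g_j$ rather than $-f_j+\im g_j$; this does not affect the divisor structure or the regularity count.)
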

\begin{remark}
\label{rem.gra.gen}
The constants $c_1,c_2$ can be easily computed. If at least one
of them is different form zero then the solution of the
corresponding quantum system exhibits growth of Sobolev norms as in
the special model \eqref{graffi}. Of course the result extends in a
trivial way to the case in which more resonances are present.
\end{remark}

\newcommand{\etalchar}[1]{$^{#1}$}
\def\cprime{$'$}


\begin{thebibliography}{DL{\v{S}}V02}



\bibitem[Bam17a]{Bam16I}
D.~Bambusi.
\newblock Reducibility of 1-d {S}chr\"odinger equation with time quasiperiodic
  unbounded perturbations, {I}.
\newblock {\em Trans. Amer. Math. Soc.}, 2017.
\newblock \href {http://dx.doi.org/10.1090/tran/7135}
  {\path{doi:10.1090/tran/7135}}.

\bibitem[Bam17b]{Bam17b}
D.~Bambusi.
\newblock Reducibility of 1-d {S}chr\"odinger equation with time
  quasiperiodic unbounded perturbations, {II}.
\newblock {\em Comm. Math. Phys.}, 353(1):353--378, 2017.
\newblock \href {http://dx.doi.org/10.1007/s00220-016-2825-2}
  {\path{doi:10.1007/s00220-016-2825-2}}.

\bibitem[BBM14]{BBM}
P.~Baldi, M.~Berti, and R.~Montalto.
\newblock K{AM} for quasi-linear and fully nonlinear forced perturbations of
  {A}iry equation.
\newblock {\em Math. Ann.}, 359(1-2):471--536, 2014.

\bibitem[BG01]{BG01}
D.~Bambusi and S.~Graffi.
\newblock Time quasi-periodic unbounded perturbations of {S}chr\"odinger
  operators and {KAM} methods.
\newblock {\em Comm. Math. Phys.}, 219(2):465--480, 2001.

\bibitem[BJL{\etalchar{+}}91]{buni}
L.~Bunimovich, H.~R. Jauslin, J.~L. Lebowitz, A.~Pellegrinotti, and P.~Nielaba.
\newblock Diffusive energy growth in classical and quantum driven oscillators.
\newblock {\em J. Statist. Phys.}, 62(3-4):793--817, 1991.

\bibitem[BM16]{BM}
M.~Berti and R.~Montalto.
\newblock Quasi-periodic standing wave solutions of gravity-capillary water
  waves.
\newblock {\em arXiv:1602.02411 [math.AP]}, 2016.

\bibitem[Com87]{C87}
M.~Combescure.
\newblock The quantum stability problem for time-periodic perturbations of the
  harmonic oscillator.
\newblock {\em Ann. Inst. H. Poincar\'e Phys. Th\'eor.}, 47(1):63--83, 1987.

\bibitem[CR12]{CombRob}
M. Combescure and D. Robert.
\newblock {\em Coherent states and applications in mathematical physics}.
\newblock Theoretical and Mathematical Physics. Springer, Dordrecht, 2012.

\bibitem[Del14]{del1}
J.-M. Delort.
\newblock Growth of {S}obolev norms for solutions of time dependent
  {S}chr\"odinger operators with harmonic oscillator potential.
\newblock {\em Comm. Partial Differential Equations}, 39(1):1--33, 2014.

\bibitem[DL{\v{S}}V02]{DSV02}
P.~Duclos, O.~Lev, P.~{\v{S}}{\v{t}}ov{\'{\i}}{\v{c}}ek, and M.~Vittot.
\newblock Weakly regular {F}loquet {H}amiltonians with pure point spectrum.
\newblock {\em Rev. Math. Phys.}, 14(6):531--568, 2002.

\bibitem[D{\v{S}}96]{DS96}
P.~Duclos and P.~{\v{S}}{\v{t}}ov{\'{\i}}{\v{c}}ek.
\newblock Floquet {H}amiltonians with pure point spectrum.
\newblock {\em Comm. Math. Phys.}, 177(2):327--347, 1996.

\bibitem[EK09]{eliasson09}
H.~L. Eliasson and S.~B. Kuksin.
\newblock On reducibility of {S}chr\"odinger equations with quasiperiodic in
  time potentials.
\newblock {\em Comm. Math. Phys.}, 286(1):125--135, 2009.

\bibitem[Eli88]{eliasson88}
L.~H. Eliasson.
\newblock Perturbations of stable invariant tori for {H}amiltonian systems.
\newblock {\em Ann. Scuola Norm. Sup. Pisa Cl. Sci. (4)}, 15(1):115--147
  (1989), 1988.

\bibitem[EV83]{enve}
V. Enss and K. Veseli\'c.
\newblock Bound states and propagating states for time-dependent
  {H}amiltonians.
\newblock {\em Ann. Inst. H. Poincar\'e Sect. A (N.S.)}, 39(2):159--191, 1983.

\bibitem[FP15]{FP}
R.~Feola and M.~Procesi.
\newblock Quasi-periodic solutions for fully nonlinear forced reversible
  {S}chr\"odinger equations.
\newblock {\em J. Differential Equations}, 259(7):3389--3447, 2015.

\bibitem[GP16]{BP16}
B.~Gr\'ebert and E.~Paturel.
\newblock On reducibility of quantum harmonic oscillator on ${\R}^d$ with
  quasiperiodic in time potential.
\newblock {\em arXiv:1603.07455 [math.AP]}, 2016.

\bibitem[GT11]{GT11}
B.~Gr{\'e}bert and L.~Thomann.
\newblock K{AM} for the quantum harmonic oscillator.
\newblock {\em Comm. Math. Phys.}, 307(2):383--427, 2011.

\bibitem[GY00]{graffi}
S.~Graffi and K.~Yajima.
\newblock Absolute continuity of the {F}loquet spectrum for a nonlinearly
  forced harmonic oscillator.
\newblock {\em Comm. Math. Phys.}, 215(2):245--250, 2000.

\bibitem[H\"07]{horm3}
L. H\"ormander.
\newblock {\em The analysis of linear partial differential operators. {III}}.
\newblock Classics in Mathematics. Springer, Berlin, 2007.
\newblock Pseudo-differential operators, Reprint of the 1994 edition.

\bibitem[HLS86]{Hagedorn}
G. Hagedorn, M. Loss, and J. Slawny.
\newblock Nonstochasticity of time-dependent quadratic {H}amiltonians and the
  spectra of canonical transformations.
\newblock {\em J. Phys. A}, 19(4):521--531, 1986.

\bibitem[IPT05]{IPT}
G.~Iooss, P.~I. Plotnikov, and J.~F. Toland.
\newblock Standing waves on an infinitely deep perfect fluid under gravity.
\newblock {\em Arch. Ration. Mech. Anal.}, 177(3):367--478, 2005.

\bibitem[Kat95]{Kato}
T. Kato.
\newblock {\em Perturbation theory for linear operators}.
\newblock Classics in Mathematics. Springer-Verlag, Berlin, 1995.
\newblock Reprint of the 1980 edition.

\bibitem[Kuk87]{Kuk87}
S. Kuksin.
\newblock Hamiltonian perturbations of infinite-dimensional linear systems with
  imaginary spectrum.
\newblock {\em Funktsional. Anal. i Prilozhen.}, 21(3):22--37, 95, 1987.

\bibitem[Kuk93]{Kuk93}
S. Kuksin.
\newblock {\em Nearly integrable infinite-dimensional {H}amiltonian systems},
  volume 1556 of {\em Lecture Notes in Mathematics}.
\newblock Springer-Verlag, Berlin, 1993.

\bibitem[Kuk97]{Kuk97}
S.~B. Kuksin.
\newblock On small-denominators equations with large variable coefficients.
\newblock {\em Z. Angew. Math. Phys.}, 48(2):262--271, 1997.

\bibitem[Kuk98]{Kuk98}
S. Kuksin.
\newblock A {KAM}-theorem for equations of the {K}orteweg-de {V}ries type.
\newblock {\em Rev. Math. Math. Phys.}, 10(3):ii+64, 1998.

\bibitem[LY10]{LY10}
J.~Liu and X.~Yuan.
\newblock Spectrum for quantum {D}uffing oscillator and small-divisor equation
  with large-variable coefficient.
\newblock {\em Comm. Pure Appl. Math.}, 63(9):1145--1172, 2010.

\bibitem[Mon14]{M}
R.~Montalto.
\newblock {KAM} for quasi-linear and fully nonlinear perturbations of {A}iry
  and {K}d{V} equations.
\newblock {\em {P}hd {Thesis}, SISSA - ISAS}, 2014.

\bibitem[MR17]{maro}
A.~Maspero and D.~Robert.
\newblock On time dependent {S}chr{\"o}dinger equations: Global well-posedness
  and growth of Sobolev norms.
\newblock {\em  J. Funct. Anal.}, 273(2):721--781, 2017. \url{doi: 10.1016/j.jfa.2017.02.029}

\bibitem[PT01]{PT}
P.~I. Plotnikov and J.~F. Toland.
\newblock Nash-{M}oser theory for standing water waves.
\newblock {\em Arch. Ration. Mech. Anal.}, 159(1):1--83, 2001.

\bibitem[Wan08]{W08}
W.-M. Wang.
\newblock Pure point spectrum of the {F}loquet {H}amiltonian for the quantum
  harmonic oscillator under time quasi-periodic perturbations.
\newblock {\em Comm. Math. Phys.}, 277(2):459--496, 2008.

\bibitem[Way90]{Way90}
C.~E. Wayne.
\newblock Periodic and quasi-periodic solutions of nonlinear wave equations via
  {KAM} theory.
\newblock {\em Comm. Math. Phys.}, 127(3):479--528, 1990.

\bibitem[You99]{you}
Jiangong You.
\newblock Perturbations of lower-dimensional tori for {H}amiltonian systems.
\newblock {\em J. Differential Equations}, 152(1):1--29, 1999.

\end{thebibliography}
\end{document}